\newcommand{\N}{\mathbb{N}}
\newcommand{\Z}{\mathbb{Z}}
\newcommand{\R}{\mathbb{R}}
\newcommand{\C}{\mathbb{C}}
\newcommand{\CP}{\mathbb{CP}}
\newcommand{\sphere}{\mathbb S}
\DeclareMathOperator{\Id}{Id}
\DeclareMathOperator{\OO}{O}
\DeclareMathOperator{\PGL}{PGL}
\DeclareMathOperator{\SO}{SO}
\DeclareMathOperator{\HH}{H}
\DeclareMathOperator{\RRef}{Ref}
\DeclareMathOperator{\Bl}{Bl}
\DeclareMathOperator{\Aut}{Aut}
\DeclareMathOperator{\Mod}{Mod}
\DeclareMathOperator{\Diff}{Diff}
\DeclareMathOperator{\Homeo}{Homeo}
\DeclareMathOperator{\Isom}{Isom}
\DeclareMathOperator{\Stab}{Stab}
\DeclareMathOperator{\dJ}{\mathcal J}
\DeclareMathOperator{\supp}{supp}
\DeclareMathOperator{\proj}{pr}
\DeclareMathOperator{\diag}{diag}
\newtheorem{thm}{Theorem}[section]
\newtheorem{cor}[thm]{Corollary}
\newtheorem{prop}[thm]{Proposition}
\newtheorem{lem}[thm]{Lemma}
\newtheorem{defn}[thm]{Definition}
\theoremstyle{remark}
\newtheorem{rmk}[thm]{Remark}
\begin{document}
\title{Mapping classes fixing an isotropic homology class of minimal genus $0$ in rational $4$-manifolds}
\author{Seraphina Eun Bi Lee}
\date{}
\maketitle
\begin{abstract}
For any $N \geq 1$, let $M_N$ denote the rational $4$-manifold $\CP^2 \# N \overline{\CP^2}$. In this paper we study the stabilizer $\Stab(w)$ of a primitive, isotropic class $w\in H_2(M_N; \Z)$ of minimal genus $0$ under the natural action of the topological mapping class group $\Mod(M_N)$ on $H_2(M_N; \Z)$. Although most elements of $\Stab(w)$ cannot be represented by homeomorphisms that preserve any Lefschetz fibration $M_N \to \Sigma$, we show that any element of $\Stab(w)$ can be represented by a diffeomorphism that \emph{almost preserves} a holomorphic, genus-$0$ Lefschetz fibration $\proj: M_N \to \CP^1$ whose generic fibers represent the homology class $w$. We also answer the Nielsen realization problem for a certain maximal torsion-free, abelian subgroup $\Lambda_w$ of $\Mod(M_N)$ by finding a lift of $\Lambda_w$ to $\Diff^+(M_N) \leq \Homeo^+(M_N)$ under the quotient map $q: \Homeo^+(M_N)\to \Mod(M_N)$ which can be made to almost preserve $\proj: M_N \to \CP^1$. All results of this paper also hold for every primitive, isotropic class $w \in H_2(M_N; \Z)$ if $N \leq 8$ because any such class has minimal genus $0$. 
\end{abstract}

\section{Introduction}\label{sec:introduction}

The (topological) \emph{mapping class group} $\Mod(M)$ of a closed, oriented manifold $M$ is the group
\[
	\Mod(M) := \pi_0(\Homeo^+(M))
\]
of isotopy classes of homeomorphisms of $M$. There is a natural action of $\Mod(M)$ on $H_2(M; \Z)$ preserving the intersection form $Q_M$ and we consider the stabilizer $\Stab(w) \leq \Mod(M)$ of any class $w \in H_2(M; \Z)$.

Suppose $M$ is a smooth, simply-connected $4$-manifold. If $w \in H_2(M; \Z)$ is a nonzero homology class with self-intersection $0$ then $w$ is called \emph{isotropic}. One way in which isotropic classes arise are as the homology class of the generic fibers of a Lefschetz fibration $p: M \to \Sigma$ where $\Sigma$ is a closed, oriented surface. 

In some settings, elements $g \in \Stab(w) \leq \Mod(M)$ are known to admit representative maps $\varphi$ that preserve some Lefschetz fibration $p: M \to \Sigma$ whose generic fibers represent the homology class $w$, meaning that there exists some diffeomorphism $\psi$ of $\Sigma$ such that $p \circ \varphi = \psi \circ p$ and $[\varphi] = g$. For example, Gizatullin (\cite{gizatullin}) showed that any parabolic automorphism of a compact K\"ahler surface $M$ must preserve some elliptic fibration $M \to \Sigma$ (also see \cite[Proposition 1.4]{cantat} or \cite[Theorem 4.3, Appendix]{diller--favre}). Smoothly, the cases of rational elliptic surfaces and K3 surfaces are studied in forthcoming work of Farb--Looijenga \cite{farb--looijenga-elliptic}; they show, for example, that any $g \in \Stab(w)$ can be represented by a diffeomorphism preserving the fibers of some holomorphic elliptic fibration $M \to \CP^1$.

In this paper we study representative maps of the stabilizers of isotropic classes of \emph{rational manifolds} $M$ and their relationships to genus-$0$ Lefschetz fibrations $M \to \Sigma$. More specifically, we study manifolds of the form
\[
	M_N := \CP^2 \#N \overline{\CP^2} \qquad \text{ for }N \geq 1,
\]
which are the underlying smooth $4$-manifolds of the blowup of $\CP^2$ at $N$ points. If $N \leq 8$, all primitive, isotropic classes $w \in H_2(M_N; \Z)$ are represented by generic fibers of a genus-$0$, holomorphic Lefschetz fibration $p: M_N \to \CP^1$. We sometimes refer to such a Lefschetz fibration as a \emph{conic bundle structure on} $M_N$. Note that these Lefschetz fibrations are not relatively minimal unless $N = 1$. See Section \ref{sec:dejonq}.

\bigskip
\noindent
{\bf{Representing $\Stab(w)$ by diffeomorphisms.}} Let $N \geq 1$ and let $w \in H_2(M_N; \Z)$ be any primitive, isotropic class of minimal genus $0$. Although any such class $w$ is represented by a generic fiber of a genus-$0$ Lefschetz fibration $p: M_N \to \CP^1$, it is not hard to show that no $\varphi \in \Homeo^+(M_N)$ with $[\varphi] \in \Stab(w)$ can preserve such a fibration $p$ if $[\varphi]$ has infinite order in $\Mod(M_N)$.
\begin{prop}\label{prop:no-lift-preserving-any-pi}
Let $N \geq 1$ and let $w \in H_2(M_N; \Z)$ be a primitive, isotropic class of minimal genus $0$. Let $\varphi \in \Homeo^+(M_N)$ represent an infinite-order mapping class $[\varphi] \in \Stab(w) \leq \Mod(M_N)$. There does not exist any Lefschetz fibration $p: M_N \to \Sigma$ where $\Sigma$ is a closed, oriented surface such that $\varphi$ preserves $p$, i.e. such that there exists a homeomorphism $h: \Sigma \to \Sigma$ such that $p \circ \varphi = h \circ p$. 
\end{prop}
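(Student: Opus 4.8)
The plan is to push everything onto the induced action on $H_2(M_N;\Z)$, whose intersection form has signature $(1,N)$. First I would invoke the theorems of Freedman and Quinn: for closed, simply-connected topological $4$-manifolds the homomorphism $\Mod(M_N)\to\Aut(H_2(M_N;\Z),Q_{M_N})$ is injective, so $[\varphi]$ has infinite order if and only if $A:=\varphi_*$ has infinite order as an isometry. Now $A$ fixes the primitive isotropic class $w$, and since $Q_{M_N}$ has signature $(1,N)$ the quotient $w^\perp/\langle w\rangle$ is negative definite of rank $N-1$; hence $A$ acts there with finite order, and after replacing $A$ by a power I may assume it acts trivially on $w^\perp/\langle w\rangle$ while fixing $w$. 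Such an isometry is unipotent: on $w^\perp$ it has the form $x\mapsto x+c(x)\,w$ for a homomorphism $c\colon w^\perp\to\Z$ with $c(w)=0$, and a short computation using that $A$ is an isometry shows $A$ has finite order precisely when $c\equiv 0$; it also shows that if $c\not\equiv 0$ then the fixed set of $A$ is contained in $w^\perp$. Thus infinite order of $[\varphi]$ is equivalent to $c\not\equiv 0$.

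Next, suppose for contradiction that $\varphi$ preserves a Lefschetz fibration $p\colon M_N\to\Sigma$ via $h\colon\Sigma\to\Sigma$. Because the generic fiber is connected, $p_*\colon\pi_1(M_N)\to\pi_1(\Sigma)$ is surjective, so $\pi_1(\Sigma)=1$ and $\Sigma\cong S^2$. Let $f\in H_2(M_N;\Z)$ be the generic fiber class; all fibers are homologous, so $Af=f$. By the previous paragraph the fixed set of $A$ lies in $w^\perp$, so $f\in w^\perp$; as $f^2=0$ and $w^\perp$ is negative semidefinite with radical $\langle w\rangle$, I conclude $f\in\langle w\rangle\otimes\Q$, i.e. the fiber class is a (rational, hence integral) multiple of $w$. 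Now let $V_p\subseteq H_2(M_N;\Z)$ be generated by the classes of the irreducible components of all fibers of $p$. Every such component is disjoint from a generic fiber, so $V_p\subseteq f^\perp=w^\perp$, and $\varphi$ permutes this finite set of component classes; hence $A$ has finite order on $V_p$. Enlarging the power once more, I may assume $A$ fixes $V_p$ pointwise, which forces $c|_{V_p}\equiv 0$.

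To finish I must show $V_p$ spans $w^\perp$ over $\Q$: then $c\equiv 0$, so $A$ has finite order, contradicting the infinite order of $[\varphi]$. When $p$ has genus-$0$ fibers this is a direct computation: each singular fiber is a pair of spheres meeting at a node, so $\chi(M_N)=N+3$ forces exactly $N-1$ singular fibers; choosing one component from each gives $N-1$ classes that are independent in the rank-$(N-1)$ negative-definite lattice $w^\perp/\langle w\rangle$, so together with $f$ they span $w^\perp$ rationally. The main obstacle is therefore to rule out higher fiber genus, and this is exactly where the hypothesis that $w$ has minimal genus $0$ must enter. The plan here is that $p$ carries a symplectic structure (Gompf) whose canonical class $K_p$ satisfies the adjunction identity $K_p\cdot f=2g-2$ for the fiber genus $g$; applying the adjunction inequality for $b^+=1$ to a smoothly embedded sphere representing $w$ (which exists since $w$ has minimal genus $0$) should yield $K_p\cdot w\le -2$, and together with $f\in\langle w\rangle$ this forces $g=0$. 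The delicate point, which I expect to require the most care, is justifying this adjunction bound for a merely smoothly (not symplectically) embedded sphere of square $0$ in the $b^+=1$ setting; alternatively one can appeal to the classification of minimal-genus-$0$ isotropic classes in $M_N$ to identify $p$ with a conic bundle directly, and then conclude by the genus-$0$ spanning computation above.
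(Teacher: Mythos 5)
Your core lattice-theoretic argument is sound, but there is a genuine gap exactly where you flagged it: the exclusion of higher fiber genus. The adjunction-inequality route you propose as the primary fix will not work as stated: for a \emph{smoothly} embedded sphere of square $0$ in a manifold with $b^+=1$, Seiberg--Witten adjunction-type bounds are chamber-dependent and in fact the presence of such a sphere tends to force vanishing of the relevant invariants, so you cannot extract $K_p\cdot w\le -2$ this way. Your fallback (``identify $p$ with a conic bundle directly'') is circular: identifying $p$ with a conic bundle presupposes knowing its fibers have genus $0$, which is the claim at issue. The correct move --- and it is what the paper does in its Proposition \ref{prop:no-other-lefschetz} --- is to aim the gauge theory at the \emph{fiber} rather than at the sphere representing $w$: by Gompf, since $[F]\neq 0$ there is a symplectic structure on $M_N$ in which the regular fiber $F$ is symplectic; by the Ozsv\'ath--Szab\'o symplectic Thom conjecture theorem (which has no restriction on $b^+$), $F$ realizes the minimal genus of $[F]$; and your own parabolic analysis gives $[F]=aw$, whose minimal genus is $0$ because a sphere representing $w$ has trivial normal bundle ($w^2=0$), so $|a|$ parallel push-offs can be tubed into an embedded sphere in class $aw$. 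Hence $F$ is a sphere, and your genus-$0$ computation takes over. (Two smaller points you should also patch: you need $[F]\neq 0$ both for Gompf's theorem and for $f^\perp=w^\perp$ --- the paper makes this a hypothesis of Proposition \ref{prop:no-other-lefschetz} --- and you should pass to $\varphi^2$ at the start so that $\varphi_*[F]=[F]$ rather than $-[F]$ in case $h$ reverses orientation on $\Sigma$.)

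Once genus $0$ is secured, your argument is correct and is a genuinely different route from the paper's. You take a single sufficiently large power of $\varphi_*$ that acts trivially on $w^\perp/\Z\{w\}$ and fixes each of the finitely many fiber-component classes, write it on $w^\perp$ as $x\mapsto x+c(x)w$, and kill $c$ by showing those component classes span $w^\perp$ rationally (the $N-1$ pairwise-orthogonal $(-1)$-sphere components from the $N-1$ singular fibers, plus $f$); by Freedman--Quinn this forces $[\varphi]$ to have finite order. The paper instead runs an induction on $N$: after passing to a power, $\varphi$ preserves a $(-1)$-sphere component $S$ of a singular fiber, is blown down equivariantly to a homeomorphism of $M_{N-1}$ (or $\CP^1\times\CP^1$) preserving the induced fibration, and finiteness of order is propagated back up through the blow-down. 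Your version avoids the equivariant blow-down and the induction entirely, and it also makes explicit the step the paper leaves implicit when invoking its Proposition \ref{prop:no-other-lefschetz}, namely that the fiber class of any $\varphi$-invariant fibration must lie in $\Q\{w\}$ and therefore has minimal genus $0$.
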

For a proof, see Section \ref{sec:dejonq}. In this paper we ask instead that any diffeomorphism representing any infinite-order mapping class $f \in \Stab(w) \leq \Mod(M_N)$ \emph{almost preserves} some Lefschetz fibration $p: M_N \to \Sigma$.
\begin{defn}[{\bf{Almost preserving a Lefschetz fibration}}]\label{defn:almost-preserving}
A group of diffeomorphisms $G \leq \Diff^+(M)$ \emph{almost preserves} a Lefschetz fibration $p: M \to \Sigma$ if the elements of $G$ act on the fibers of $p$ outside of disjoint neighborhoods the singular fibers of $p$. More precisely, there exist
\begin{enumerate}[(a)]
\item disjoint, open neighborhoods $V_1, \dots, V_{m} \subseteq \Sigma$ of the images of the singular points $z_1, \dots, z_{m} \in \Sigma$, and
\item a homomorphism $i: G \to \Diff^+\left(\Sigma - \bigcup_{k=1}^{m} V_k\right)$
\end{enumerate}
such that for all $\varphi \in G$, the following commutes: 
\begin{figure}[H]
\centering
\begin{tikzcd}
M-\bigcup_{k=1}^{m} p^{-1}(V_k) \arrow[r, "\varphi"] \arrow[d, "p"] & M-\bigcup_{k=1}^{m} p^{-1}(V_k) \arrow[d, "p"] \\
\Sigma - \bigcup_{k=1}^{m} V_k \arrow[r, "i(\varphi)"]                     & \Sigma - \bigcup_{k=1}^{m} V_k                       
\end{tikzcd}
\end{figure}
\end{defn}

With Definition \ref{defn:almost-preserving} in hand, our first theorem shows a contrast with Proposition \ref{prop:no-lift-preserving-any-pi}:
\begin{thm}[{\bf{Mapping classes fixing an isotropic class}}]\label{thm:individual-stab-v}
Let $N \geq 1$ and let $w \in H_2(M_N; \Z)$ be a primitive, isotropic class of minimal genus $0$. For any $h\in \Stab(w)$, there exists $\varphi\in\Diff^+(M_N)$ almost preserving a holomorphic genus-$0$ Lefschetz fibration $p: M_N \to \CP^1$ whose generic fiber represents the homology class $w$ such that $[\varphi] = h \in \Mod(M_N)$.
\end{thm}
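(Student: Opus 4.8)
The plan is to reduce the statement to a purely homological realization problem and then build the required diffeomorphisms one generator of $\Stab(w)$ at a time. First I would invoke the theorem of Freedman and Quinn that for the closed, simply-connected $4$-manifold $M_N$ the natural map $\Mod(M_N) \to \Aut(Q_{M_N})$ is an isomorphism; in particular it is injective, so a self-homeomorphism acting trivially on $H_2$ is isotopic to the identity. Consequently, to produce a diffeomorphism $\varphi$ with $[\varphi] = h$ it suffices to produce \emph{any} $\varphi \in \Diff^+(M_N)$ whose induced map $\varphi_*$ on $H_2(M_N;\Z)$ equals $h$. The entire content of the theorem then becomes: realize the homological action of each $h \in \Stab(w)$ by a diffeomorphism that almost preserves a fixed holomorphic genus-$0$ Lefschetz fibration with fiber class $w$.

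Next I would fix a convenient model for $p$. Using that $w$ has minimal genus $0$ (as established in Section \ref{sec:dejonq}), I would present $M_N$ as $\CP^1 \times \CP^1$ blown up at $N-1$ points lying on distinct fibers of the first projection, so that $w$ is the fiber class $f$ and $p : M_N \to \CP^1$ is the induced conic bundle, with $N-1$ reducible fibers, the $i$-th being a pair of $(-1)$-spheres $A_i = f - e_i$ and $B_i = e_i$. In the basis $\{s, f, e_1, \dots, e_{N-1}\}$ of $H_2$ I would record the structure of $\Stab(w)$: writing $K = \langle s, f\rangle^{\perp}$, which is the negative-definite lattice spanned by the $e_i$, there is an exact sequence $1 \to \Lambda_w \to \Stab(w) \to \OO(K) \to 1$, where $\OO(K)$ is the finite group of signed permutations of the $e_i$ and $\Lambda_w \cong \Z^{N-1}$ is the group of Eichler transvections $t_{f,v}$, $v \in K$ of even square. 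This reduces the problem to realizing a finite generating set: the signed permutations and the transvections $t_{f,\,e_i - e_j}$.

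I would realize the finite part $\OO(K)$ by maps that honestly preserve $p$ (hence certainly almost preserve it): a permutation of the $e_i$ is realized by braiding the critical values of $p$ in the base, which lifts to a fiber-preserving diffeomorphism via the local Lefschetz models, while the sign change $e_i \mapsto f - e_i$ is the swap of the two components of the $i$-th reducible fiber, realized by the local Picard--Lefschetz monodromy (a Dehn twist about the vanishing cycle) supported near $b_i$. The heart of the argument, and the step I expect to be the main obstacle, is the realization of the transvections $t_{f,\,e_i - e_j}$: these have infinite order, so by Proposition \ref{prop:no-lift-preserving-any-pi} no representative can preserve $p$ globally, and one must use the freedom near the singular fibers in an essential way. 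Here I would use the de Jonqui\`eres viewpoint of Section \ref{sec:dejonq}, realizing $t_{f,\,e_i-e_j}$ as a composition of elementary transformations (a blow-up at a point of one reducible fiber followed by a blow-down of a fiber component) joined along an arc between $b_i$ and $b_j$; this birational self-map of the conic bundle is fiber-preserving away from neighborhoods of the singular fibers and can be resolved to an honest diffeomorphism that almost preserves $p$. The technical core is to fix the explicit local model of the nodal degeneration, perform the twisting there, and verify by direct computation that the induced action on $H_2$ is exactly $t_{f,\,e_i-e_j}$.

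Finally I would assemble the pieces. Fixing a single system of disk neighborhoods $V_1, \dots, V_{N-1}$ of the critical values that is permuted by the base maps induced by all generators, each generator almost preserves $p$ with respect to $\{V_k\}$. Given $h \in \Stab(w)$, I would write $h$ as a word in the generators and take $\varphi$ to be the corresponding composition, checking that the induced base diffeomorphisms of $\CP^1 - \bigcup_k V_k$ compose compatibly so that $\varphi$ again almost preserves $p$ (this is routine given the permutation structure of the generators on the $V_k$). Since $\varphi_* = h$ by construction and the kernel of $\Mod(M_N) \to \Aut(Q_{M_N})$ is trivial, we conclude $[\varphi] = h$, completing the proof.
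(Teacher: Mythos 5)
Your outline shares its skeleton with the paper's proof (Freedman--Quinn, the split exact sequence $0 \to \Lambda_w \to \Stab(w) \to \OO(K) \to 0$ of Lemma \ref{lem:ses}, realization of a finite generating set, and conjugation to handle a general class $w$ of minimal genus $0$), but there is a genuine gap at the step you treat as easy: realizing the sign changes in $\OO(K)$. You propose to realize the sign change on $e_i$ by a diffeomorphism supported near the $i$-th reducible fiber that swaps its two components $e_i$ and $v - e_i$ (writing $v$ for your fiber class $f$, as in the paper). No homeomorphism of $M_N$ does this. Indeed, an isometry $\phi$ of $(H_2(M_N;\Z), Q_{M_N})$ with $\phi(v) = v$, $\phi(e_i) = v - e_i$, and $\phi(e_j) = e_j$ for $j \neq i$ does not exist: writing $\phi(s) = \alpha s + a v + \sum_k b_k e_k$, the conditions $Q(\phi(s),\phi(v)) = 1$, $Q(\phi(s),\phi(e_j)) = 0$, $Q(\phi(s),\phi(e_i)) = 0$, and $Q(\phi(s),\phi(s)) = 0$ force $\alpha = 1$, $b_j = 0$ for $j \neq i$, $b_i = -1$, and finally $2a - 1 = 0$, which has no integral solution. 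This parity obstruction is exactly why fiber-component swaps can only be performed on an \emph{even} number of reducible fibers at once: it is why the de Jonqui\`eres involution swaps the components of all $2m$ of its singular fibers, and why the paper's local maps $\gamma_k$ swap the components of \emph{two} adjacent singular fibers, giving $[\gamma_k] = \RRef_{v - e_k - e_{k+1}} \circ \RRef_{e_k - e_{k+1}}$ (Proposition \ref{prop:diffeo-properties}) rather than any single sign change. Your appeal to Picard--Lefschetz monodromy also fails on its own terms: for a genus-$0$ fibration the monodromy about a nodal fiber is a Dehn twist on a sphere, hence isotopic to the identity; it acts trivially on homology and does not exchange the components of the singular fiber.

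The gap is fatal as the argument stands, because without individual sign changes your generators do not generate $\Stab(w)$: the permutation lifts together with the transvections $E(v, e_i - e_j)$ (your $t_{f,\,e_i - e_j}$) and all their conjugates only produce $E(v, D) \rtimes S_{N-1}$, where $D$ is the rank-$(N-2)$ sublattice spanned by the differences $e_i - e_j$; in particular $\RRef_{e_1}$ and the generator $g = E(v, 2e_1)$ of $\Lambda_v$ from Lemma \ref{lem:eichler} are never reached. The paper's fix is an ingredient of a different nature, with no counterpart in your proposal: the diffeomorphisms $r_k$ of Definition \ref{defn:rk}, supported in a tubular neighborhood of the exceptional sphere $e_k$ and modeled on complex conjugation of $\overline{\CP^2}$, which realize $\RRef_{e_k}$ while almost preserving $\proj$. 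The paper then takes as generators $\varphi_k = r_k \circ r_{k+1} \circ \gamma_k$, $\psi$, the $r_k$, and the braiding maps $s_k$; your braiding suggestion for the permutations does match Lemma \ref{lem:transpositions}. Finally, what you defer as ``the technical core'' (cutting a de Jonqui\`eres-type map off to the identity outside a neighborhood of a \emph{pair} of singular fibers, and verifying all resulting maps commute) is the bulk of the paper's Section \ref{sec:parabolics}; your instinct that this is the main obstacle is right, but the even-fiber constraint above must be built into that construction from the start.
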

If $N \leq 8$, Theorem \ref{thm:individual-stab-v} holds for any primitive, isotropic class $w \in H_2(M_N; \Z)$ because any such class has minimal genus $0$. See Corollary \ref{cor:individual-stab-v}.

We can also consider subgroups of $\Stab(w)$ rather than individual elements. The next theorem concerns a certain finite-index, abelian, torsion-free subgroup of $\Stab(w)$ which we now define. For any $N \geq 2$, any element of $\Stab(w)$ must preserve the following $\Z$-submodule of $H_2(M_N; \Z)$:
\[
	w^\perp := \{ w_0 \in H_2(M_N; \Z) : Q_{M_N}(w,w_0) = 0 \} \cong \Z^N.
\]
Therefore, $\Stab(w)$ acts on the lattice $(w^\perp/\Z\{w\}, \overline{Q_{M_N}})$ where $\overline{Q_{M_N}}$ is the unimodular, symmetric, bilinear form on $w^\perp/\Z\{w\}$ induced by $Q_{M_N}$. Because $(H_2(M_N; \Z), Q_{M_N})$ has signature $(1, N)$, $(w^\perp/\Z\{w\}, \overline{Q_{M_N}})$ must be negative definite of rank $N-1$.
\begin{defn}\label{defn:lambda}
Let $\Lambda_w$ denote the kernel of the map $\Stab(w) \to \Aut(w^\perp/\Z\{w\}, \overline{Q_{M_N}})$. 
\end{defn}
There is an identification of $\Lambda_w$ with the subgroup of even elements of the lattice $(w^\perp/\Z\{w\}, \overline{Q_{M_N}})$, and $\Stab(w)$ fits into a split short exact sequence
\[
0 \to \underbrace{\Lambda_w}_{\cong \Z^{N-1} \leq w^\perp/\Z\{w\}} \to \Stab(w) \to\Aut(w^\perp/\Z\{w\}, \overline{Q_{M_N}}) \to 0.
\]
Two properties of $\Lambda_w$ are that it is a maximal torsion-free, abelian subgroup of $\Mod(M_N)$ and that it has finite index in $\Stab(w)$. See Lemmas \ref{lem:eichler} and \ref{lem:ses}. 
\begin{thm}[{\bf{Realizing $\Lambda_w$ by diffeomorphisms}}]\label{thm:parabolics}
Let $N \geq 2$ and let $w \in H_2(M_N; \Z)$ be a primitive, isotropic class of minimal genus $0$. There exists a homomorphism $\rho_w: \Lambda_w \to \Diff^+(M_{N})$ such that the following diagram commutes: 
\begin{figure}[H]
\centering
\begin{tikzcd}
& \Diff^+(M_{N}) \arrow[d, "q"] \\
\Lambda_w \arrow[ru, "\rho_w", dashed] \arrow[r, hook] & \Mod(M_{N})                  
\end{tikzcd}
\end{figure}
Moreover, the image $\rho_w(\Lambda_w)$ almost preserves a holomorphic genus-$0$ Lefschetz fibration $p: M_{N} \to \CP^1$ whose generic fiber represents the homology class $w$.
\end{thm}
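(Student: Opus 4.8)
The plan is to realize $\Lambda_w$ by commuting diffeomorphisms built from the fiberwise geometry of a single conic bundle, using the identification of $\Lambda_w$ with Eichler transvections together with the homological rigidity of $\Mod(M_N)$. First I would fix a holomorphic genus-$0$ Lefschetz fibration $p: M_N \to \CP^1$ with generic fiber in the class $w$ and exactly $N-1$ nodal fibers (as in Section \ref{sec:dejonq}), and choose two disjoint holomorphic sections $\sigma_0, \sigma_\infty$. Over the complement $\Sigma_0 := \CP^1 \setminus \bigsqcup_{k=1}^{N-1} V_k$ of small disks around the singular values, these sections provide a fiberwise identification of each fiber with $\CP^1$ marked at $0,\infty$, hence a fiberwise $\C^*$-action. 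The essential structural input is that, by Lemma \ref{lem:eichler}, $\Lambda_w$ is precisely the group of Eichler transvections $t_{w,v}$ for $v$ ranging over the even vectors of $(w^\perp/\Z\{w\}, \overline{Q_{M_N}})$, so that $v \mapsto t_{w,v}$ is an isomorphism onto $\Lambda_w$; and that, since $M_N$ is closed and simply connected, the homology representation $\Mod(M_N) \to \Aut(H_2(M_N;\Z), Q_{M_N})$ is an isomorphism (Freedman--Quinn). Consequently, to produce $\rho_w$ it suffices to produce, for each even $v$, a diffeomorphism $\rho_w(v) \in \Diff^+(M_N)$ whose homology action is $t_{w,v}$, in such a way that $\rho_w$ is a genuine homomorphism and each $\rho_w(v)$ almost preserves $p$.

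The key point is that homology rigidity only guarantees a lift \emph{up to isotopy}, so the homomorphism property --- the equality $\rho_w(v)\circ\rho_w(v') = \rho_w(v+v')$ rather than mere isotopy --- must be arranged by hand, and this is the genuine Nielsen-realization content. To this end I would build every $\rho_w(v)$ from one abelian structure. Over $\Sigma_0$, let $\rho_w(v)$ act as the fiberwise $\C^*$-multiplication by a map $\lambda_v: \Sigma_0 \to \C^*$ whose winding numbers around the punctures are read off from the coordinates of $v$ relative to the singular-fiber components, chosen so that $\lambda_{v+v'} = \lambda_v\,\lambda_{v'}$. Then over $\Sigma_0$ the maps compose literally by multiplication and commute, and being fiber-preserving there they almost preserve $p$ in the sense of Definition \ref{defn:almost-preserving}. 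Over each $p^{-1}(V_k)$ I would extend $\rho_w(v)$ by a fixed local model constructed from the holomorphic node $xy=t$, chosen additively in $v$ so that the extensions also compose literally; by Proposition \ref{prop:no-lift-preserving-any-pi} such extensions cannot be fiber-preserving when $t_{w,v}$ has infinite order, which is exactly what forces the local models to be nontrivial and is consistent with ``almost'' rather than ``fully'' preserving $p$.

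Next I would compute the homology action of $\rho_w(v)$ and match it to $t_{w,v}$ by a Picard--Lefschetz-type bookkeeping, tracking the two sections and the singular-fiber classes through the multiplication and the local node-model. The winding of $\lambda_v$ between two punctures transports a copy of the fiber class $w$ from one section-component to another, producing the ``difference'' transvections such as $t_{w,E_i-E_j}$ (which send $E_i \mapsto E_i + w$ and $E_j \mapsto E_j - w$); the local node-model contributes the ``doubling'' behavior responsible for the evenness condition, reflecting $\pi_1(\SO(3)) = \Z/2$. Here I expect to confront the numerical discrepancy that pure fiberwise multiplications are classified up to homotopy by $H^1(\Sigma_0;\Z) \cong \Z^{N-2}$, one short of $\operatorname{rank}\Lambda_w = N-1$; the missing generator (e.g.\ $t_{w,E_{N-1}+E_N}$ or $t_{w,2E_N}$) must come from the node-local construction rather than from winding, so I would isolate one transvection realized purely by a twist supported near a singular fiber and check directly that it, together with the $N-2$ winding transvections, freely generates $\Lambda_w$ and acts correctly.

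The main obstacle is precisely this coordination at the singular fibers: one must choose the local extensions over the $p^{-1}(V_k)$ so that simultaneously (i) each is an orientation-preserving diffeomorphism agreeing with the multiplication on $\partial p^{-1}(V_k)$, (ii) the family is strictly additive in $v$, so that $\rho_w$ is a homomorphism and not merely a set-theoretic lift, and (iii) the total homology action is exactly $t_{w,v}$, including the even-lattice correction. The fiberwise-multiplication part of the construction is automatically commuting and fiber-preserving, so all of the genuine difficulty --- smoothing the twist across the node, verifying the evenness/doubling, and arranging additivity of the local models --- is concentrated in the neighborhoods of the singular fibers. Once these local models are pinned down, assembling $\rho_w$ and verifying both the commuting diagram with $q$ and the almost-preserving property is formal.
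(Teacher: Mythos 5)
You have the right algebraic skeleton: the identification of $\Lambda_w$ with the even vectors via Eichler transformations (Lemma \ref{lem:eichler}), the Freedman--Quinn reduction, the correct insistence that the lifts must commute \emph{on the nose} rather than up to isotopy, and the accurate count that fiberwise $\C^*$-multiplications can see only $H^1(\Sigma_0;\Z)\cong\Z^{N-2}$ of the rank-$(N-1)$ group $\Lambda_w$. But the proposal has a genuine gap, and it sits exactly where you flag it: the local models over the $p^{-1}(V_k)$ are never constructed, and their existence with your properties (i)--(iii) is not a finishing touch --- it \emph{is} the theorem. Note first that the only formula you actually write down fails there: if $\lambda_v$ has winding $n_k\neq 0$ around $z_k$, the map $q\mapsto \lambda_v(p(q))\cdot q$ has no continuous limit along the nodal fiber (in blowup coordinates it becomes $(x,\xi)\mapsto(x,\lambda_v(x)^{-1}\xi)$ with $\lambda_v(x)=(x/|x|)^{n_k}$, which has no limit as $x\to 0$). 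Second, your ``difference'' transvections $t_{w,e_i-e_j}$ require windings $+1$ and $-1$, both odd, and here there is a concrete obstruction: one of the two section disks in $p^{-1}(V_k)$ has trivial normal bundle, and an extension preserving that disk would give an automorphism of a trivial plane bundle over a disk whose boundary restriction winds $n_k$ times in $\GL_2^+(\R)\simeq S^1$, forcing $n_k=0$. So for odd winding any extension, fiber-preserving or not, must move the sections --- i.e., it must break precisely the fiberwise $\C^*$-structure from which you propose to derive strict additivity, commutation with the other generators, and the ``Picard--Lefschetz bookkeeping'' of the homology action. None of those three properties is supported once the local models leave that structure, and establishing them is the actual content, not a formality.

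The missing idea is the paper's key input, the de Jonqui\`eres involution $\dJ$ (Section \ref{sec:dejonq}): a globally defined holomorphic involution preserving $\proj$ \emph{including over the nodal fibers}, exactly because it is of swapping type --- it exchanges the two sections and the two $(-1)$-sphere components of each singular fiber --- which is how it evades the winding obstructions (its boundary loop around a node is the nontrivial element of $\pi_1(\SO(3))=\Z/2$, yet it extends). The paper then (1) cuts $\dJ$ off to the identity across annuli, using that it is fiberwise conjugate to rotation by $\pi$ (Lemma \ref{lem:diffeo-bdry}), producing the commuting local maps $\gamma_k$ (Proposition \ref{prop:diffeo-properties}); (2) corrects their homology action to the Eichler generators $f_k=E(v,e_k+e_{k+1})$ by reflections $r_k$ supported near the exceptional spheres (Definition \ref{defn:rk}) --- note these generators have coordinate sum $2$, hence are invisible to winding maps, whose winding numbers must sum to zero; (3) obtains the remaining generator $g=E(v,2e_1)$ by localizing $(r_1\circ\dJ)^2$ in a single nodal neighborhood (Lemma \ref{lem:psi}); and (4) passes from $v$ to an arbitrary $w$ of minimal genus $0$ by conjugating with a diffeomorphism furnished by Li--Li \cite{li--li} --- an input your opening sentence (``fix a fibration with fiber class $w$'') also requires but does not cite. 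Without a concrete substitute for $\dJ$ and the $r_k$, your outline correctly locates where the proof must live but does not supply one.
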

Similarly as with Theorem \ref{thm:individual-stab-v}, Theorem \ref{thm:parabolics} holds for any primitive, isotropic class $w \in H_2(M_N; \Z)$ if $2 \leq N \leq 8$. See Corollary \ref{cor:parabolics}.

One way to interpret the results of this paper is via the natural action of (an index-$2$ subgroup of) $\Mod(M_N)$ on $\mathbb H^N$ and the classification of hyperbolic isometries into three types: elliptic, parabolic, and hyperbolic. Infinite-order elements of the stabilizer $\Stab(w)$ for an isotropic class $w \in H_2(M_N; \Z)$ are precisely the elements of $\Mod(M_N)$ acting by parabolic isometries on $\mathbb H^N$ (Lemma \ref{lem:infinite-order-parabolics}). Therefore the following is an immediate corollary of Theorem \ref{thm:individual-stab-v}.
\begin{cor}\label{cor:individual-parabolics}
Let $2 \leq N \leq 8$. If $g \in \Mod(M_N)$ acts by a parabolic isometry on $\mathbb H^N$ then there exists $\varphi \in \Diff^+(M_N)$ with $[\varphi] = g$ and almost preserving a holomorphic genus-$0$ Lefschetz fibration $p: M_N \to \CP^1$.
\end{cor}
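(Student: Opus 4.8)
The plan is to deduce this directly from Theorem \ref{thm:individual-stab-v} by translating the hyperbolic-geometry hypothesis into the homological language of that theorem. The only genuine content is to produce, from a parabolic $g$, a primitive isotropic class $w$ of minimal genus $0$ that $g$ fixes; once this is in hand, the conclusion is exactly the assertion of Theorem \ref{thm:individual-stab-v} applied to $h = g \in \Stab(w)$. So I would organize the argument as: (i) extract a fixed primitive isotropic class from the parabolic, (ii) invoke the $N \leq 8$ minimal-genus fact, and (iii) quote the theorem.

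First I would recall the standard dictionary between $\mathbb H^N$ and the lattice $(H_2(M_N;\Z), Q_{M_N})$ of signature $(1,N)$: one (projectivized) sheet of the hyperboloid $\{x : Q_{M_N}(x,x) > 0\}$ models $\mathbb H^N$, and its boundary $\partial \mathbb H^N$ is identified with the set of rays of isotropic vectors. A parabolic isometry has infinite order and fixes a unique point of $\partial \mathbb H^N$, and this fixed point corresponds to a ray $\mathbb R_{>0} w$ spanned by a primitive isotropic class $w \in H_2(M_N;\Z)$. Because $g$ lies in the index-$2$ subgroup preserving the positive cone (so that "parabolic isometry" is meaningful for it) and fixes this ray, it scales $w$ by some positive factor; since $g$ and $g^{-1}$ both preserve the integral lattice and $w$ is primitive, the only lattice points on $\mathbb R_{>0}w$ are the positive integer multiples of $w$, forcing the scaling factor to be $1$. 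Hence $g(w) = w$, i.e. $g \in \Stab(w)$. This is precisely the content of Lemma \ref{lem:infinite-order-parabolics}, which I would cite rather than reprove.

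With $w$ produced, the hypothesis $N \leq 8$ guarantees that every primitive isotropic class — in particular $w$ — has minimal genus $0$, as recorded in Section \ref{sec:dejonq}. Applying Theorem \ref{thm:individual-stab-v} (equivalently, its $N \leq 8$ form, Corollary \ref{cor:individual-stab-v}) to $h = g \in \Stab(w)$ then yields a diffeomorphism $\varphi \in \Diff^+(M_N)$ with $[\varphi] = g$ that almost preserves a holomorphic genus-$0$ Lefschetz fibration $p : M_N \to \CP^1$ whose generic fiber represents $w$, which is exactly the claim. I expect no obstacle at the level of the corollary itself: all of the difficulty is already packaged into Theorem \ref{thm:individual-stab-v} and Lemma \ref{lem:infinite-order-parabolics}. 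The one point deserving a moment's care is the integrality argument in step (i) — checking that fixing the isotropic \emph{ray} forces $g$ to fix the primitive class $w$ itself and not merely its ray up to positive scaling — together with confirming that $g$ belongs to the cone-preserving subgroup, without which the phrase "acts by a parabolic isometry on $\mathbb H^N$" would not be well-defined.
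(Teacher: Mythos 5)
Your proposal is correct and follows exactly the paper's route: the paper also deduces this corollary immediately from Theorem \ref{thm:individual-stab-v} by citing Lemma \ref{lem:infinite-order-parabolics} to convert the parabolic hypothesis into $g \in \Stab(w)$ for a primitive isotropic class $w$, and then using the fact that $N \leq 8$ forces $w$ to have minimal genus $0$ (Lemma \ref{lem:isotropic}, rather than Section \ref{sec:dejonq}, is the precise reference for that fact). Your extra care about integrality of the fixed ray and membership in the cone-preserving subgroup is exactly the content already packaged into Lemma \ref{lem:infinite-order-parabolics}, so nothing further is needed.
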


\bigskip
\noindent
{\bf{Related work.}} The relationship between mapping classes of $4$-manifolds fixing an isotropic class and Lefschetz fibrations with the prescribed generic fiber has been studied in some settings. As mentioned above, see Gizatullin \cite{gizatullin} for the case of compact, K\"ahler surfaces and elliptic fibrations and Farb--Looijenga \cite{farb--looijenga-elliptic} for the case of rational elliptic and K3 manifolds; \cite{farb--looijenga-elliptic} was an inspiration for this current paper.

Automorphisms preserving a conic bundle structure also play an important role in the study of finite groups of automorphisms of $M_N$. An example of such a complex automorphism is the \emph{de Jonqui\'eres} involution, which is a main tool for this paper. Some examples of work in this direction include the classification of order-$2$ birational automorphisms of $\CP^2$ up to conjugacy (Bertini \cite{bertini}, Bayle--Beauville \cite{bayle--beauville}) and finite subgroups of birational automorphisms of $\CP^2$ in general (Dolgachev--Iskovskikh \cite{dolgachev--iskovskikh}) in the complex category and a study of finite groups of symplectomorphisms of rational surfaces (Chen--Li--Wu \cite{chen--li--wu}) in the symplectic category. 

\bigskip
\noindent
{\bf{Organization of the paper.}} In Section \ref{sec:isotropic}, we recall relevant facts about the mapping class group $\Mod(M_N)$ of rational manifolds and deduce basic facts about isotropic classes $w \in H_2(M_N; \Z)$, including the proof of Proposition \ref{prop:no-lift-preserving-any-pi}. In Section \ref{sec:parabolics}, we prove Theorem \ref{thm:parabolics} by explicitly constructing the necessary diffeomorphisms. Using these diffeomorphisms from Section \ref{sec:parabolics}, we prove Theorem \ref{thm:individual-stab-v} in Section \ref{sec:individual-stab-v}. 

\bigskip
\noindent
{\bf{Acknowledgments.}} I would like to thank Benson Farb for suggesting this problem, for his continuous encouragement, advice, and guidance throughout this project, and for many helpful comments on an earlier draft of this paper. I would also like to thank Benson and Eduard Looijenga for sharing their results on rational elliptic and K3 manifolds in their forthcoming work \cite{farb--looijenga-elliptic} with me. I would also like to thank Carlos A. Serv\'an for many useful conversations about rational $4$-manifolds and Lefschetz fibrations. 

\section{Isotropic homology classes and their stabilizers in $\Mod(M_N)$}\label{sec:isotropic}

In this section we collect useful properties of the mapping class groups of $4$-manifolds, isotropic classes in $H_2(M_N; \Z)$, and certain Lefschetz fibrations.

\subsection{Mapping class group of $M_N$}
For any $4$-manifold $M$, let $Q_M$ denote the intersection form on $H_2(M; \Z)$. The form $Q_M$ is an integral, unimodular, nondegenerate, symmetric bilinear form, and the lattice $(H_2(M; \Z), Q_M)$ is denoted by $\HH_M$. The automorphism group of the lattice $\HH_M$ is denoted $\OO(\HH_M)$.

The mapping class group $\Mod(M) := \pi_0(\Homeo^+(M))$ of a closed, oriented, simply connected $4$-manifold $M$ is computable due to the following theorems of Freedman and Quinn.
\begin{thm}[Freedman \cite{freedman}, Quinn \cite{quinn}]\label{thm:freedman--quinn}
Let $M^4$ be a closed, oriented, and simply connected manifold. The map
\[
	\Phi: \Mod(M) \to \OO(\HH_M)
\]
given by $\Phi: [\varphi] \mapsto \varphi_*$ is an isomorphism of groups.
\end{thm}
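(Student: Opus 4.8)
The plan is to verify that $\Phi$ is a well-defined homomorphism and then treat injectivity and surjectivity separately, with the two hard directions supplied by the deep work of Freedman and Quinn.

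First I would check that $\Phi$ is a well-defined group homomorphism. Any $\varphi \in \Homeo^+(M)$ induces an automorphism $\varphi_*$ of $H_2(M;\Z)$, and by naturality of the cup product together with the fact that an orientation-preserving homeomorphism fixes the fundamental class, $\varphi_*$ preserves $Q_M$; hence $\varphi_* \in \OO(\HH_M)$. Isotopic homeomorphisms are in particular homotopic and so induce equal maps on $H_2$, so $\Phi$ descends to $\Mod(M)$, and functoriality of $\varphi \mapsto \varphi_*$ makes it a homomorphism.

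For injectivity I must show that if $\varphi_* = \Id$ on $H_2(M;\Z)$ then $\varphi$ is isotopic to the identity; this splits into a homotopy step and an isotopy step. For the homotopy step I would use that a closed, simply-connected $4$-manifold is homotopy equivalent to a CW complex $\left(\bigvee S^2\right) \cup_\phi e^4$, a wedge of $2$-spheres carrying $H_2$ with a single $4$-cell attached by a map $\phi \in \pi_3\left(\bigvee S^2\right)$ encoding $Q_M$. Since $\varphi$ fixes $\pi_2 = H_2$, its restriction to the $2$-skeleton agrees up to homotopy with the inclusion, and the resulting discrepancy over the top cell is measured by an element of $\pi_4(M)$; the point, where the rigidity of homeomorphisms as opposed to arbitrary self-homotopy-equivalences enters, is that this obstruction vanishes, so $\varphi$ is homotopic to $\Id_M$. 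The isotopy step, promoting a homeomorphism homotopic to the identity to one isotopic to the identity, is Quinn's theorem \cite{quinn}, resting on topological surgery and the controlled $h$-cobordism theory in dimension $4$ that Freedman's disk embedding theorem makes available; concretely one first shows $\varphi$ is pseudoisotopic to the identity and then that pseudoisotopy implies isotopy in the simply-connected topological category.

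For surjectivity I would invoke Freedman's classification \cite{freedman} in its strong form: for closed, simply-connected topological $4$-manifolds, any isometry between intersection forms with matching Kirby--Siebenmann invariants is induced by a homeomorphism. Applying this to $M$ with itself and a given $\psi \in \OO(\HH_M)$ — the Kirby--Siebenmann invariants agreeing automatically, since only the single manifold $M$ is involved — yields a homeomorphism inducing $\psi$, and it is orientation-preserving because $\psi$ preserves (rather than negates) $Q_M$ and hence acts by $+1$ on $H_4(M;\Z)$. The main obstacle is the isotopy step of injectivity: passing from homotopic-to-the-identity to isotopic-to-the-identity is precisely where the subtlety of topological $4$-manifold theory concentrates, and it is exactly Quinn's deep result, whereas the homotopy step and the realization step follow comparatively directly from obstruction theory and Freedman's classification.
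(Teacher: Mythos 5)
The paper offers no proof of this theorem at all --- it is quoted as a black-box result with citations to Freedman and Quinn --- and your outline is a faithful reconstruction of the standard argument underlying exactly those citations: well-definedness by naturality, surjectivity from Freedman's classification (with the Kirby--Siebenmann invariant trivially matching), and injectivity via pseudoisotopy and Quinn's isotopy theorem. Your sketch correctly identifies where the genuine depth lies (including the subtlety that a self-homotopy-equivalence acting trivially on $H_2$ need not be homotopic to the identity, so homeomorphism rigidity is essential), and defers those steps to the right sources, so this is consistent with the paper's treatment.
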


The Mayer--Vietoris sequence implies that $H_2(M_N; \Z) = H_2(\CP^2; \Z) \oplus H_2(\overline{\CP^2}; \Z)^{\oplus N}$ and gives the usual $\Z$-basis $\{H, E_1, \dots, E_N\}$. The intersection form $Q_{M_N}$ is given by the diagonal, $(N+1)\times(N+1)$ matrix
\[
	\diag(1, -1, \dots, -1)
\]
with respect to the $\Z$-basis $\{H, E_1, \dots, E_N\}$. On the other hand, there is a natural $\Z$-basis 
\begin{equation}\label{eqn:defn-homology}
	\{ s, v, e_1, \dots, e_{N-1}\}
\end{equation}
of $H_2((\CP^1\times\CP^1) \# (N-1)\overline{\CP^2}; \Z)$ via the Mayer--Vietoris sequence; here, $s$ and $v$ correspond to the first and second factors of $\CP^1 \times \CP^1$ respectively. There is a diffeomorphism $(\CP^1\times\CP^1) \# (N-1)\overline{\CP^2} \cong M_N$ for all $N \geq 2$ giving an identification
\[
	v = H-E_1, \quad s = H-E_2, \quad e_1 = H-E_1-E_2, \quad e_k = E_{k+1} \text{ for all }2 \leq k \leq N-1.
\]
In this paper, we will mostly work with the $\Z$-basis $\{s, v, e_1, \dots, e_{N-1}\}$ of $H_2(M_N; \Z)$.

Therefore by Freedman--Quinn (Theorem \ref{thm:freedman--quinn}),
\[
	\Mod(M_{N}) \cong\OO(1,N)(\Z) := \OO(\HH_{M_N})
\]
We will identify $\OO(\HH_{M_N})$ and $\Mod(M_N)$ throughout this paper. 

On the other hand, consider $\mathbb E^{1,N} := (\R^{N+1}, Q_N)$ where $Q_N$ is the diagonal bilinear symmetric form of signature $(1,N)$:
\[
	Q_N((x_0, x_1, \dots, x_N), (y_0, y_1, \dots, y_N)) = x_0 y_0 - x_1 y_1 - \dots - x_N y_N.
\] 
There is a natural identification of $\R^{N+1}$ with the $\R$-span of the $\Z$-basis $\{H, E_1, \dots, E_N\}$ of $H_2(M_N; \Z)$ which makes the $\R$-bilinear extension of $Q_{M_N}$ coincide with $Q_N$. The hyperboloid model for $\mathbb H^N$ sits in $\mathbb E^{1,N}$ by
\[
	\mathbb H^N = \{w = (w_0, w_1, \dots, w_N)\in \R^{N+1} : Q_N(w,w) = 1, \, w_0 > 0\}.
\]
where the Riemannian metric is defined by the restriction of $Q_N$ to $\mathbb H^N$ (see \cite[Chapter 2]{thurston}). Because $\OO(1,N)(\Z)$ acts on $\R^{N+1}$ and preserves $Q_N$, it contains an index-$2$ subgroup $\OO^+(1,N)(\Z)$ acting by isometries on $\mathbb H^N$.

The boundary sphere of $\mathbb H^N$ corresponds to 
\[
	\partial \mathbb H^N = \{w = (w_0, w_1, \dots, w_N) \in \R^{N+1} : Q_N(w,w) = 0, \, w_0 > 0\} /\sim
\]
where $aw \sim w$ for all $a \in \R_{>0}$. Parabolic isometries of $\mathbb H^N$ are those that fix a unique point of $\partial \mathbb H^n$. By \cite[Problem 2.5.24(g)]{thurston}, parabolic isometries not only preserve some line in $\R^{N+1}$ but fix it pointwise. Moreover, parabolic isometries in $\OO^+(1,N)(\Z)$ must fix a nonzero, isotropic vector with integral entries, i.e. some nonzero $w \in H_2(M_{N}; \Z)$ with $Q_{M_N}(w,w) = 0$. The following lemma shows that the converse is true as well. 
\begin{lem}\label{lem:infinite-order-parabolics}
Let $N \geq 2$. An element $f \in \OO(\HH_{M_N})\cap \Isom(\mathbb H^N)$ acts by a parabolic isometry if and only if $f$ has infinite order and there exists some primitive, isotropic class $w \in H_2(M_N; \Z)$ such that $f \in \Stab(w) \in \OO(\HH_{M_N})$.
\end{lem}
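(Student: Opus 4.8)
The plan is to invoke the trichotomy of isometries of $\mathbb H^N$ into elliptic, parabolic, and hyperbolic types, and to pin down parabolicity by combining a finiteness criterion (which handles the "infinite order" clause) with a fixed-vector analysis (which handles the "$\Stab(w)$" clause). The organizing observation is a dichotomy: an element $f \in \OO(\HH_{M_N}) \cap \Isom(\mathbb H^N)$ has finite order if and only if it acts by an elliptic isometry. For the direction finite order $\Rightarrow$ elliptic, a finite-order isometry of $\mathbb H^N$ fixes the barycenter of any of its (finite) orbits, hence fixes an interior point. For elliptic $\Rightarrow$ finite order, if $f$ fixes a point $p \in \mathbb H^N$ then $f$ lies in the stabilizer $K_p \leq \Isom(\mathbb H^N)$, which is compact; since $\OO(\HH_{M_N}) \cong \OO(1,N)(\Z)$ is discrete, the cyclic group $\langle f\rangle \subseteq K_p \cap \OO(1,N)(\Z)$ is simultaneously discrete and relatively compact, hence finite. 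In particular, every infinite-order $f$ is parabolic or hyperbolic.

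For the forward implication, I would suppose $f$ is parabolic. By the dichotomy it is not elliptic, so it has infinite order. As recalled in the text (using \cite[Problem 2.5.24(g)]{thurston}), a parabolic element of $\OO^+(1,N)(\Z)$ fixes its unique boundary point by fixing the corresponding isotropic line $L \subseteq \mathbb E^{1,N}$ pointwise; this line is rational because it is the radical $\ker(f-\Id)\cap\ker(f-\Id)^\perp$ of the rational subspace $\ker(f-\Id)$, so scaling a generator to be primitive produces a primitive isotropic class $w \in H_2(M_N;\Z)$ with $f(w)=w$, i.e. $f \in \Stab(w)$.

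For the converse, I would suppose $f$ has infinite order and $f \in \Stab(w)$ for a primitive isotropic $w$; by the dichotomy $f$ is not elliptic, so it only remains to exclude the hyperbolic case. Normalizing $w$ so that $w_0 > 0$, the class determines a boundary point $[w] \in \partial\mathbb H^N$ fixed by $f$. Here is the key computation: a hyperbolic isometry fixes no nonzero isotropic vector. Indeed, write $\mathbb E^{1,N} = P \oplus P^\perp$ as the $Q_N$-orthogonal sum of the signature-$(1,1)$ plane $P = L_+ \oplus L_-$ spanned by the two invariant isotropic lines and its negative-definite complement $P^\perp$ of rank $N-1$. On $P$ the map $f$ has eigenvalues $\lambda, \lambda^{-1}$ with $\lambda > 1$, so any fixed vector $v = v_P + v_{P^\perp}$ forces $v_P = 0$, whence $v \in P^\perp$ is spacelike (negative norm) unless $v = 0$. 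Since $f$ fixes the nonzero isotropic vector $w$, it cannot be hyperbolic, and so $f$ is parabolic.

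I expect the main technical care to lie in the converse, specifically in distinguishing a fixed boundary \emph{ray} from a fixed \emph{vector}: the hypothesis $f \in \Stab(w)$ asserts that the actual vector $w$ is fixed, and the entire leverage for excluding the hyperbolic case comes from the fact that a hyperbolic isometry \emph{scales} generators of its two invariant isotropic lines by $\lambda^{\pm 1} \neq 1$ rather than fixing them, with all genuinely fixed vectors being spacelike. The subsidiary point requiring attention is the finiteness argument for elliptic elements, where one must explicitly use the discreteness of $\OO(1,N)(\Z)$ inside the real group $\OO^+(1,N)(\R)$ so that intersecting with a compact point-stabilizer yields a finite set.
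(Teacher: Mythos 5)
Your proposal is correct, and at the top level it runs the same strategy as the paper: classify $f$ via the elliptic/parabolic/hyperbolic trichotomy, use the equivalence of ellipticity with finite order to handle the order clause, and use the fixed isotropic class to kill the hyperbolic case. The difference lies in how the hyperbolic case is excluded. The paper never invokes the normal form of a hyperbolic isometry: it takes an arbitrary isotropic eigenvector $w_0$ of $f$ (with $f(w_0)=\lambda w_0$) and shows it is proportional to $w$ --- if $w_0 \in \R\{w\}^\perp$ this follows from negative definiteness of $\R\{w\}^\perp/\R\{w\}$, while if $a := Q_{M_N}(w,w_0)\neq 0$ then $a = \lambda a$ forces $\lambda = 1$, so $f$ fixes $aw+w_0$, a vector of positive square $2a^2$, making $f$ elliptic and hence of finite order, a contradiction; thus $f$ fixes a unique boundary point and is parabolic. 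Your route instead quotes the spectral structure of a hyperbolic isometry (invariant signature-$(1,1)$ plane $P = L_+\oplus L_-$ with eigenvalues $\lambda^{\pm 1}$, $\lambda>1$) to conclude that all of its fixed vectors lie in the negative definite complement $P^\perp$, hence are spacelike; this is correct and arguably more conceptual, at the cost of assuming that normal form rather than deriving what is needed directly. Two things you do that the paper leaves implicit are worth noting: you prove both directions of the finite-order/elliptic dichotomy (the paper only asserts and uses the implication from elliptic to finite order, exactly by your discreteness-inside-a-compact-stabilizer argument), and you justify integrality of the parabolic fixed vector via $L = \ker(f-\Id)\cap \ker(f-\Id)^\perp$, where the paper simply defers to the discussion preceding the lemma. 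For that identification you should add one line: $L$ lies in the radical of $\ker(f-\Id)$ because any fixed vector $u$ pairing nontrivially with $L$ would span, together with $L$, a pointwise-fixed plane of signature $(1,1)$ containing a timelike fixed vector, making $f$ elliptic; and since a totally isotropic subspace in signature $(1,N)$ has dimension at most $1$, the radical is exactly $L$. With that line added, your argument is complete.
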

\begin{proof}
One direction holds by the discussion preceding the statement of the lemma, so it suffices to prove that if $f \in \Stab(w)$ has infinite order then $f$ acts on $\mathbb H^N$ by a parabolic isometry. 

Let $w_0 \in \mathbb E^{1,N}$ be an isotropic vector such that $f(w_0) = \lambda w_0$ for some $\lambda \in \R$. If $w_0 \in \R\{w\}^\perp$ then $w_0$ must be a scalar multiple of $w$ because the restriction of $Q_{M_N}$ to $\R\{w\}^\perp / \R\{w\}$ is negative definite. If $Q_{M_N}(w, w_0) =: a \neq 0$ then $\lambda = 1$ because
\[
	a = Q_{M_{N}}(w, w_0) = Q_{M_N}(w, f(w_0)) = \lambda a.
\]
Then $f(aw + w_0) = aw+w_0$ and 
\[
	Q_{M_N}(aw+w_0, aw+w_0) = 2a Q_{M_N}(w,w_0) = 2a^2 > 0. 
\]
A scalar multiple of $aw+w_0$ lies in $\mathbb H^{N}$, meaning $f$ acts on $\mathbb H^{N}$ by an elliptic isometry, and all such isometries of $\mathbb H^{N}$ in $\OO(\HH_{M_N})$ have finite order. Therefore, $w_0$ must be a scalar multiple of $w$ and hence $f$ fixes a unique point in $\partial \mathbb H^N$.
\end{proof}

\subsection{Primitive, isotropic classes $w\in H_2(M_N; \Z)$ and $\Stab(w) \leq \Mod(M_N)$}

Consider lattices $(L, Q)$, where $L \cong \Z^r$ as an abelian group for some $r \in \N$ and $Q$ is an integral, unimodular, nondegenerate, symmetric, bilinear form on $L$. For each primitive isotropic vector $w \in L$, there exists $u \in L$ such that $Q(w,u) = 1$ by unimodularity of $Q$. There is an orthogonal decomposition 
\[
	L = \Z\{u, w\} \oplus \Z\{u, w\}^\perp
\]
to which $Q$ restricts to a unimodular form on each factor. The restriction of $Q$ to $\Z\{u, w\}$ has signature $(1,1)$. Note that $\Z\{u, w\}^\perp$ is a lift of $w^\perp/\Z\{w\}$ under the natural quotient $w^\perp \to w^\perp/\Z\{w\}$. This means that $(\Z\{u, w\}^\perp, Q|_{\Z\{u, w\}^\perp})$ is isomorphic as a lattice to $(w^\perp/\Z\{w\}, \overline Q)$ via this quotient, where $\overline Q$ is the induced bilinear form on $w^\perp/\Z\{w\}$. We fix the above notation throughout this section.
\begin{lem}\label{lem:stab-v-determined}
Let $w \in L$ be a primitive isotropic vector. If $h_1, h_2 \in \Stab(w) \leq \OO(L, Q)$ and $h_1|_{w^\perp} = h_2|_{w^\perp}$ then $h_1 = h_2$. In particular, for any $h_1, h_2 \in \Stab(v) \leq \OO(\HH_{M_N})$, where $v$ is the homology class as given in (\ref{eqn:defn-homology}) and $N \geq 2$, if $h_1(e_k) = h_2(e_k)$ for all $1 \leq k \leq N-1$ then $h_1 = h_2$. 
\end{lem}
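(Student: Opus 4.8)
The plan is to reduce the statement to showing that a single map is the identity. Setting $g := h_2^{-1} h_1 \in \Stab(w)$, I first note that each $h_i$ preserves the submodule $w^\perp$: since $h_i(w) = w$, we have $Q(w, h_i(x)) = Q(w, x)$, so $h_i(w^\perp) = w^\perp$. The hypothesis $h_1|_{w^\perp} = h_2|_{w^\perp}$ then gives $g|_{w^\perp} = \mathrm{id}$, and it suffices to prove that any $g \in \Stab(w)$ fixing $w^\perp$ pointwise must equal the identity; the conclusion $h_1 = h_2$ follows at once.

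Next I would exploit the orthogonal decomposition $L = \Z\{u, w\} \oplus \Z\{u, w\}^\perp$ fixed above. Since $w^\perp = \Z\{w\} \oplus \Z\{u, w\}^\perp$, the group $L$ is generated by $u$ together with $w^\perp$. Hence $g$ is completely determined by the single value $g(u)$, and the whole problem reduces to showing $g(u) = u$.

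To locate $g(u)$, I use that $g$ preserves $Q$ and fixes $w^\perp$ pointwise: for every $x \in w^\perp$,
\[
Q(g(u) - u, x) = Q(g(u), g(x)) - Q(u, x) = 0,
\]
so $g(u) - u \in (w^\perp)^\perp$. A short computation in the above decomposition shows $(w^\perp)^\perp = \Z\{w\}$, using nondegeneracy of $Q$ on $\Z\{u,w\}^\perp$ together with $Q(u,w) = 1$. Thus $g(u) = u + cw$ for some $c \in \Z$, and the coefficient is pinned down by the norm: since $Q(w,w) = 0$ and $Q(u,w) = 1$,
\[
Q(u,u) = Q(g(u), g(u)) = Q(u + cw,\, u + cw) = Q(u,u) + 2c,
\]
forcing $c = 0$ and hence $g(u) = u$. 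This yields $g = \mathrm{id}$ and proves the general statement.

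For the ``in particular'' clause, I apply the general statement with $w = v$ and $u = s$. A direct check in the basis $\{s, v, e_1, \dots, e_{N-1}\}$ gives $Q(v, s) = 1$, $Q(v, e_k) = 0$ for all $k$, and $Q(v, v) = 0$, and expressing an arbitrary class shows $v^\perp = \Z\{v, e_1, \dots, e_{N-1}\}$. Since $h_1, h_2 \in \Stab(v)$ already agree on $v$ and are assumed to agree on each $e_k$, they agree on all of $v^\perp$, so the first part gives $h_1 = h_2$. The argument is essentially formal; the only point requiring care is identifying the indeterminacy of $g(u)$ as exactly $\Z\{w\}$ and then using the self-intersection of $u$ — the nonvanishing $Q(u,w) = 1$ is precisely what makes the norm constraint $2c = 0$ effective, so that is where I expect the (modest) subtlety to lie.
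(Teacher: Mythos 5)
Your proof is correct and follows essentially the same route as the paper's: both reduce to pinning down $g(u)$ for $g = h_2^{-1}\circ h_1$ using the decomposition $L = \Z\{u,w\} \oplus \Z\{u,w\}^\perp$, and both finish with the identical norm computation $Q(u,u) = Q(u,u) + 2c$ forcing $g(u) = u$. The only difference is presentational --- the paper restricts $g$ to an automorphism of the rank-$2$ lattice $\Z\{u,w\}$ fixing $w$, whereas you argue directly that $g(u) - u \in (w^\perp)^\perp = \Z\{w\}$.
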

\begin{proof}
Observe that $h_1^{-1} \circ h_2$ acts as the identity on $\Z\{u, w\}^\perp \leq w^\perp$ and so $h_1^{-1} \circ h_2$ restricts to an automorphism of $\Z\{u, w\}$ preserving $Q$. The only automorphism of $(\Z\{u, w\}, Q|_{\Z\{u,w\}})$ fixing $w$ is the identity. Therefore, $h_1^{-1} \circ h_2 = \Id$ on $L$. In the case of $v \in H_2(M_N; \Z)$ for any $N \geq 2$, apply the above argument with $w = v$, $u = s$ and $\Z\{u, w\}^\perp = \Z\{e_1, \dots, e_{N-1}\}$. 
\end{proof}

Let $\Lambda_w$ denote the kernel of the natural map $h_w: \Stab(w) \to \OO(w^\perp/\Z\{w\}, \overline Q)$ (cf. Definition \ref{defn:lambda}). In order to describe $\Lambda_w$, we introduce an important type of element of $\OO(\HH_{M_N})$ used throughout this paper.
\begin{defn}
Let $N \geq 2$ and $u \in H_2(M_N; \Z)$ satisfy $Q_N(u,u) = \pm 1$ or $\pm 2$. The \emph{reflection} $\RRef_u$ about $u$ is an element of $\OO(\HH_{M_N})$ defined by
\[
	\RRef_u(x) = x - \frac{2Q_{M_N}(x,u)}{Q_{M_N}(u,u)}u.
\]
\end{defn}
In the lemma below, we use reflections and Eichler transformations to give generators for $\Lambda_w$.
\begin{lem}\label{lem:eichler}
Let $(L, Q)$ be any lattice and $w \in L$ be an isotropic vector. Let $A \leq w^\perp/\Z\{w\}$ denote the $\Z$-submodule of even elements with respect to $\overline Q$. Then there is an isomorphism of groups
\[
	E(w, \cdot): A \to \Lambda_w.
\]
In the case that $(L, Q) = \HH_{M_N}$ for any $N \geq 2$ and $w = v$, the group $\Lambda_w$ is generated by
\[
	f_k :=\RRef_{e_k}\circ \RRef_{e_{k+1}} \circ \RRef_{v-e_k-e_{k+1}} \circ \RRef_{e_k - e_{k+1}} 
\]
for $1 \leq k \leq N-2$ and $g := \RRef_{e_1} \circ \RRef_{v-e_1}$.
\end{lem}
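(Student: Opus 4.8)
The lemma has two parts: an abstract isomorphism $E(w,\cdot)\colon A \to \Lambda_w$ for a general lattice, and an explicit generating set for $\Lambda_v$ in the case of $\HH_{M_N}$. I would treat these separately, using the first to reduce the second to a finite computation.

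For the abstract part, the plan is to use the splitting $L = \Z\{u,w\}\oplus \Z\{u,w\}^\perp$ fixed before the statement, together with the identification of $(\Z\{u,w\}^\perp, Q)$ with $(w^\perp/\Z\{w\}, \overline{Q})$. For each $a \in \Z\{u,w\}^\perp \cong w^\perp/\Z\{w\}$, I would define the \emph{Eichler transformation} $E(w,a)$ by the standard formula
\[
	E(w,a)(x) = x + Q(x,w)\,a - Q(x,a)\,w - \tfrac{1}{2}Q(a,a)\,Q(x,w)\,w.
\]
The half-integer coefficient $\tfrac12 Q(a,a)$ is exactly why the domain must be restricted to the submodule $A$ of \emph{even} elements (those with $Q(a,a)\in 2\Z$): only then does $E(w,a)$ have integer matrix entries and land in $\OO(L,Q)$. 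I would verify directly that $E(w,a)$ preserves $Q$, fixes $w$, and acts trivially on $w^\perp/\Z\{w\}$, so that $E(w,a)\in\Lambda_w$; then check the homomorphism identity $E(w,a)\circ E(w,b) = E(w,a+b)$ by a short matrix computation, giving that $E(w,\cdot)$ is a homomorphism $A\to\Lambda_w$. Injectivity is immediate since $E(w,a)(u) - u$ recovers $a$ modulo $w$. For surjectivity, given $h\in\Lambda_w$, I would use Lemma \ref{lem:stab-v-determined}: $h$ is determined by its restriction to $w^\perp$, and since $h$ acts trivially on $w^\perp/\Z\{w\}$ we have $h(x) = x + c(x)w$ on $w^\perp$ for some linear functional $c$; matching this against $E(w,a)$ for the appropriate $a$ (read off from $h(u)$) shows every element of $\Lambda_w$ is an Eichler transformation. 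The evenness constraint on $a$ will drop out of the requirement that $h(u)$ be isotropic.

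For the explicit generators, I would specialize to $(L,Q)=\HH_{M_N}$, $w=v=H-E_1$, $u=s=H-E_2$, so that $\Z\{u,w\}^\perp = \Z\{e_1,\dots,e_{N-1}\}$ with $\overline{Q}$ negative definite. By the first part, $\Lambda_v \cong A$, the even sublattice of $\Z\{e_1,\dots,e_{N-1}\}$. The strategy is to show that the listed composites of reflections $f_k$ and $g$ equal the Eichler transformations $E(v,a)$ for a generating set $\{a\}$ of $A$, and that these $a$'s generate $A$. Concretely, I would compute the action of each $f_k$ and $g$ on the basis $\{s,v,e_1,\dots,e_{N-1}\}$ and check that each acts trivially on $v^\perp/\Z\{v\}$ (so they lie in $\Lambda_v$) and then identify the corresponding element $a\in A$ via the isotropic vector $f(s)$ or $g(s)$. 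I expect $g=\RRef_{e_1}\circ\RRef_{v-e_1}$ to correspond to $E(v,e_1^{\pm}\text{-type shift})$ and each $f_k$ to the Eichler transformation by a difference like $e_k - e_{k+1}$ (an even vector since $Q(e_k-e_{k+1},e_k-e_{k+1})=-2$), so that the $a$'s arising are $2e_1$ (or $e_1$-related, the smallest even multiple) together with the $e_k-e_{k+1}$; these generate the even sublattice $A$ of the standard negative-definite lattice. Once the $a$'s are shown to generate $A$, surjectivity of $E(v,\cdot)$ finishes the proof.

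\medskip

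\noindent\textbf{Main obstacle.} The conceptual content—that $\Lambda_w$ is the group of Eichler transformations by even vectors—is standard lattice theory and should go smoothly once the formula is in hand. The genuinely fiddly part will be the explicit identification: verifying by hand that each four-fold reflection composite $f_k$ and the two-fold composite $g$ (i) actually lies in $\Lambda_v$ (acts trivially mod $v$) and (ii) matches the intended Eichler transformation, and finally that the resulting even vectors generate $A$. The reflection composites are not obviously Eichler transformations on their face, so the bookkeeping of how $\RRef_{e_k},\RRef_{e_{k+1}},\RRef_{v-e_k-e_{k+1}},\RRef_{e_k-e_{k+1}}$ interact—and checking the half-integer coefficient issue works out to keep everything integral—is where I would be most careful to avoid sign and indexing errors.
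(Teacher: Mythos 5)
Your proposal is correct and takes essentially the same route as the paper: both rest on the decomposition $L = \Z\{u,w\}\oplus\Z\{u,w\}^\perp$, the Eichler transformation formula, Lemma~\ref{lem:stab-v-determined} to pin down an element of $\Lambda_w$ by its action on $w^\perp$, and an explicit computation identifying $f_k$ and $g$ with Eichler transformations that generate $A$; the only structural difference is that the paper constructs the inverse map $c\colon \Lambda_w \to w^\perp/\Z\{w\}$ (via $f(e) = e - \overline{Q}(c(f),e)w$) first and then verifies $c \circ E(w,\cdot) = \Id$, whereas you build $E(w,\cdot)$ first and prove surjectivity directly. Two minor slips that your planned computations would catch: the evenness of $Q(a,a)$ comes from $Q(h(u),h(u)) = Q(u,u)$ rather than from ``$h(u)$ is isotropic'' (in a general unimodular lattice $u$ cannot always be chosen isotropic, though $u = s$ happens to be isotropic in $\HH_{M_N}$), and the correct identification is $f_k = E(v, e_k + e_{k+1})$, not $E(v, e_k - e_{k+1})$ --- a harmless discrepancy for the conclusion, since $\{2e_1\}$ together with either $\{e_k+e_{k+1}\}_k$ or $\{e_k-e_{k+1}\}_k$ generates the even sublattice $A$.
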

\begin{proof}
For any $f \in \Lambda_w$, there exists $c(f) \in w^\perp/\Z\{w\}$ such that for any $e \in w^\perp$,
\[
	f(e) = e - \overline Q(c(f), e)w
\]
by definition of $\Lambda_w$ and unimodularity of $\overline Q$. This defines a homomorphism $c: \Lambda_w \to w^\perp/\Z\{w\}$ which is injective by Lemma \ref{lem:stab-v-determined}. 

For any $f \in \Lambda_w$, there exists $a, b \in \Z$ and $e \in \Z\{u, w\}^\perp$ such that
\[
	f(u) = u + aw + be
\]
because $Q(f(u), w) = 1$. Moreover, 
\[
	Q(u,u) = Q(f(u),\, f(u)) = Q(u,u)+  2a + b^2 Q(e, e)
\]
and so both $b^2Q(e,e)$ and $bQ(e,e)$ must be even. Because $c(f), e \in \Z\{w, u\}^\perp$, 
\begin{align*}
	0 &= Q(f(u),\, f(c(f))) = Q(f(u),\, c(f) - Q(c(f),c(f))w) =  bQ(e,\,c(f)) - Q(c(f),\,c(f)),\\
	0 &= Q(f(u), \, f(e)) = Q(f(u),\, e - Q(e, c(f))w) = bQ(e, \,e) - Q(e,\, c(f)).
\end{align*}
By the second equation, $Q(e, c(f))$ is even and by the first equation, $Q(c(f),c(f))$ is even. Hence $c(\Lambda_w) \leq A$.

Consider the homomorphism $E(w, \cdot): A \to \Lambda_w$ defined by
\[
	E(w, e): x \mapsto x + Q(w, x)e - Q(e, x)w - \frac 12 Q(e,e) Q(w, x) w 
\]
for each $e \in A$, where $E(w, e)$ is an \emph{Eichler transformation}. A computation shows that $c \circ E(w, \cdot) = \Id|_A$. Finally, if $(L, Q) = (H_2(M_N; \Z), Q_{M_N})$ and $w = v$, compute that $f_k = E(w, e_k + e_{k+1})$ for each $1 \leq k \leq N-2$ and $g = E(w, 2e_1)$, which together generate $A$.
\end{proof}

We combine the results of this subsection and record an important algebraic property of $\Stab(w)$. 
\begin{lem}\label{lem:ses}
For any primitive, isotropic vector $w \in L$, there is a split short exact sequence 
\[
	0 \to \Lambda_w \to \Stab(w) \xrightarrow{h_w} \OO(w^\perp/\Z\{w\}, \overline{Q})\to 0.
\]
In the case that $(L, Q) = \HH_{M_N}$ for any $N \geq 2$ and $w = f(v)$ for any $f \in \OO(\HH_{M_N})$, the split short exact sequence above is isomorphic to
\[
	0 \to\Z^{N-1} \to \Stab(w) \xrightarrow{h_w} \OO(N-1)(\Z) \to 0
\]
Therefore, $\Lambda_w \cong \Z^{N-1}$ is a finite-index maximal torsion-free subgroup of $\Stab(w)$ and a maximal torsion-free, abelian subgroup of $\OO(\HH_{M_N})$.
\end{lem}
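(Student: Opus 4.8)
The plan is to prove the four assertions in order: the split exactness over a general lattice, its concrete identification over $\HH_{M_N}$, finiteness of the index, and the two maximality statements; I expect the last to be the crux.

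\textbf{Split exact sequence.} Since $\Lambda_w = \ker h_w$ by definition, exactness at $\Lambda_w$ and at $\Stab(w)$ is automatic, and the only content is surjectivity of $h_w$ together with a splitting. I would produce both at once using the orthogonal decomposition $L = \Z\{u,w\} \oplus \Z\{u,w\}^\perp$ and the lattice isomorphism $\Z\{u,w\}^\perp \cong w^\perp/\Z\{w\}$ recorded before Lemma \ref{lem:stab-v-determined}. Given $\phi \in \OO(w^\perp/\Z\{w\}, \overline Q) \cong \OO(\Z\{u,w\}^\perp, Q|)$, extend it to $\tilde\phi \in \OO(L,Q)$ by letting it act as the identity on $\Z\{u,w\}$. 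Then $\tilde\phi$ fixes $w \in \Z\{u,w\}$, so $\tilde\phi \in \Stab(w)$, and $h_w(\tilde\phi) = \phi$ since $\tilde\phi$ induces $\phi$ on $w^\perp/\Z\{w\}$. Thus $h_w$ is surjective and $\phi \mapsto \tilde\phi$ is a homomorphic section, so the sequence splits.

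\textbf{Identification over $\HH_{M_N}$.} I would first treat $w = v$, $u = s$, so that $\Z\{u,w\}^\perp = \Z\{e_1,\dots,e_{N-1}\}$. A direct computation using $e_1 = H - E_1 - E_2$ and $e_k = E_{k+1}$ shows the restricted form is $\diag(-1,\dots,-1)$, whence $\OO(w^\perp/\Z\{w\}, \overline Q) \cong \OO(N-1)(\Z)$, the finite group of signed permutation matrices. Lemma \ref{lem:eichler} gives $\Lambda_v \cong A$, the even sublattice; as $\overline Q(x,x) = -\sum_i x_i^2 \equiv -\sum_i x_i \pmod 2$, the subgroup $A = \{x : \sum_i x_i \text{ even}\}$ has index $2$ in $\Z^{N-1}$, so $A \cong \Z^{N-1}$. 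For general $w = f(v)$, conjugation by $f$ carries the triple $(\Stab(v), \Lambda_v, h_v)$ isomorphically to $(\Stab(w), \Lambda_w, h_w)$, so the sequence for $w$ is isomorphic to that for $v$. Finiteness of $\OO(N-1)(\Z)$ then gives $[\Stab(w):\Lambda_w] < \infty$.

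\textbf{Maximality in $\Stab(w)$.} Using the splitting, write elements of $\Stab(w)$ as pairs $(\lambda, \sigma)$ with $\lambda \in \Lambda_w$ and $\sigma \in \OO(N-1)(\Z)$. If $T$ is torsion-free with $\Lambda_w \subseteq T$ and $(\lambda, \sigma) \in T$ has $\sigma \neq 1$, then $(-\lambda, 1) \in \Lambda_w \subseteq T$ gives $(-\lambda, 1)(\lambda, \sigma) = (0, \sigma) \in T$, a nontrivial element whose order equals that of $\sigma$, contradicting torsion-freeness. Hence $T = \Lambda_w$, so $\Lambda_w$ is a maximal torsion-free subgroup of $\Stab(w)$.

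\textbf{Maximality as torsion-free abelian in $\OO(\HH_{M_N})$.} Let $B \leq \OO(\HH_{M_N})$ be torsion-free abelian with $\Lambda_w \subseteq B$; I want $B = \Lambda_w$. Each $b \in B$ commutes with every Eichler transformation $E(w,a)$, $a \in A$. I would use the conjugation identity $g\,E(w,a)\,g^{-1} = E(gw, ga)$ for $g \in \OO(L,Q)$ (a direct check) together with $E(-w,-a) = E(w,a)$, so that $b E(w,a) b^{-1} = E(w,a)$ reads $E(bw, ba) = E(w,a)$. Fixing one nontrivial $a$, the vector $bw$ is isotropic and fixed by the parabolic $E(w,a)$; since the fixed subspace of $E(w,a)$ is contained in $w^\perp$ and the only isotropic vectors of $w^\perp$ lie in $\R w$ (the form on $w^\perp$ being negative semidefinite with radical $\R w$), primitivity forces $bw = \pm w$. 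If $bw = w$, then $ba = a$ for all $a \in A$, so $b \in \Stab(w)$. If $bw = -w$, then $ba = -a$ for all $a \in A$, so $b$ acts by $-1$ on $\Z\{u,w\}^\perp$ and hence preserves the orthogonal complement $\Z\{u,w\}$, on which $Q(bu,bw) = Q(u,w)$ and $Q(bu,bu) = Q(u,u)$ force $bu = -u$; thus $b = -\Id$, a torsion element, which is excluded. Therefore $B \subseteq \Stab(w)$, and the previous step gives $B = \Lambda_w$. I expect this last case analysis to be the main obstacle: ruling out $bw = -w$ via the Eichler conjugation formula and its collapse to $b = -\Id$.
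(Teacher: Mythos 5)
Your first three steps are correct and essentially coincide with the paper's proof: the section $\phi \mapsto \Id \oplus \phi$ relative to the decomposition $L = \Z\{u,w\}\oplus\Z\{u,w\}^\perp$, the identification of the quotient with $\OO(N-1)(\Z)$ and of $\Lambda_w$ with the index-$2$ even sublattice of $\Z^{N-1}$, and the semidirect-product computation $(-\lambda,1)(\lambda,\sigma) = (0,\sigma)$ (which usefully expands the paper's terse claim that splitness forces torsion in $\langle \Lambda_w, g\rangle$ for $g \notin \Lambda_w$). Your route to $bw = \pm w$ in the last step, via the conjugation identity $bE(w,a)b^{-1} = E(bw,ba)$ and the fixed-space analysis of an Eichler transformation, is also valid; it replaces the paper's appeal to Lemma \ref{lem:infinite-order-parabolics} (uniqueness of the fixed isotropic line of a parabolic isometry of $\mathbb H^N$) with a purely lattice-theoretic argument.

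The gap is in your case $bw = -w$. From $E(w,-ba) = E(w,a)$ you conclude $ba = -a$ exactly, hence that $b$ acts by $-1$ on $\Z\{u,w\}^\perp$, and finally that $b = -\Id$. But $E(w,\cdot)$ is not injective on $w^\perp$: one checks $E(w, e + cw) = E(w,e)$ for any $c$, so equality of Eichler transformations determines the parameter only modulo $\Z\{w\}$; equivalently, $b$ need not preserve the chosen complement $\Z\{u,w\}^\perp$, only the quotient $w^\perp/\Z\{w\}$. Concretely, $b = -E(w,e)$ with $0 \neq e \in A$ satisfies every hypothesis you use---it lies in $\OO(L,Q)$, sends $w \mapsto -w$, and commutes with every $E(w,a)$ since $\Lambda_w$ is abelian and $-\Id$ is central---yet $b \neq -\Id$. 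So ``thus $b = -\Id$'' is not a valid deduction. The repair is short and lands on the paper's own ending: your computation does show that $-b \in \Stab(w)$ acts trivially on $A \leq w^\perp/\Z\{w\}$, hence on all of $w^\perp/\Z\{w\}$ (trivial action on an index-$2$ subgroup of a torsion-free lattice implies trivial action), i.e.\ $-b \in \Lambda_w \subseteq B$; then $-\Id = (-b)\,b^{-1} \in B$ is a nontrivial torsion element, contradicting torsion-freeness of $B$. (The same modulo-$\Z\{w\}$ caveat applies to ``$ba = a$'' in the case $bw = w$, but there it is harmless since $b \in \Stab(w)$ already follows from $bw = w$ alone.)
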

\begin{proof}
There is a section $\ell$ of $h_w$ defined by
\[
	\ell: f \mapsto \Id \oplus f \in \OO(\Z\{u, w\} \oplus \Z\{u, w\}^\perp, Q) = \OO(L, Q)
\]
which shows that $h_w$ is surjective and the sequence is split. 

In the case of $(L, Q) =\HH_{M_N}$ with $N \geq 2$ and $w = f(v)$ for any $f \in \OO(\HH_{M_N})$, we can let $u = f(s)$, in which case 
\[
	(w^\perp/\Z\{w\}, \overline Q) \cong (\Z\{f(e_1), \dots, f(e_{N-1})\}, Q_{M_N})
\]
and so $\OO(w^\perp/\Z\{w\}, \overline Q) \cong \OO(N-1)(\Z)$ is finite. The subgroup 
$A \leq w^\perp/\Z\{w\}$ of even elements with respect to $\overline Q$ has index $2$ in $w^\perp/\Z\{w\}$ which has rank $N - 1$, and so $\Lambda_w \cong A \cong \Z^{N-1}$. Because the sequence is split, $\langle \Lambda_w, g \rangle$ must have torsion for any $g \in \Stab(w)$ with $g \notin \Lambda_w$ and so $\Lambda_w$ is a maximal torsion-free subgroup of $\Stab(w)$. 

For any $f \in \Stab(w)$, Lemma \ref{lem:infinite-order-parabolics} shows that $f$ is parabolic and $w \in H_2(M_N; \Z)$ is the unique element of $H_2(M_N; \Z)$ fixed by $f$, up to scaling. Suppose $g \in \OO(\HH_{M_N})$ commutes with some $f \in \Lambda_w$. Then $g(w) = \pm w$ because $f$ fixes $g(w)$, so $g \in \Stab(w)$ or $-g \in \Stab(w)$. If $\langle g, \Lambda_w \rangle$ is torsion-free then $g \in \Lambda_w$ or $-g \in \Lambda_w$ respectively. However if $-g \in \Lambda_w$ then $-g \circ g^{-1} = -\Id$ is torsion and is in $\langle g, \Lambda_w\rangle$. Therefore $g \in \Lambda_w$ which means that $\Lambda_w$ is a maximal, torsion-free, abelian subgroup of $\OO(\HH_{M_N})$.
\end{proof}
To use Lemma \ref{lem:ses}, we apply a theorem of Li--Li \cite[Theorem 4.2]{li--li} which says that for any $N \geq 2$ and any primitive, isotropic class $w \in H_2(M_N; \Z)$ of minimal genus $0$, there exists $\varphi \in \Diff^+(M_N)$ such that $[\varphi](v) = w$. Moreover, following elementary lemma strengthens this theorem in the case $2 \leq N \leq 8$. Recall the fixed $\Z$-basis $\{s, v, e_1, \dots, e_{N-1}\}$ of $H_2(M_N; \Z)$ given in (\ref{eqn:defn-homology}). 
\begin{lem}\label{lem:isotropic}
If $2 \leq N \leq 8$ and $w \in H_2(M_N; \Z)$ is an isotropic class, then
\begin{enumerate}[(a)]
\item there exists $f \in \OO(\HH_{M_N})$ such that $f(w) = v$ if $w$ is primitive, and \label{lem:isotropic-orbits}
\item $w$ has minimal genus $0$. \label{lem:minimal-genus}
\end{enumerate}
\end{lem}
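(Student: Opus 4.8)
The plan is to handle the two parts by different mechanisms: part (\ref{lem:isotropic-orbits}) is pure lattice theory, essentially contained in the discussion preceding Lemma \ref{lem:stab-v-determined}, while part (\ref{lem:minimal-genus}) combines (\ref{lem:isotropic-orbits}) with a realization of lattice isometries by diffeomorphisms together with a tubing construction to handle imprimitivity.

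For (\ref{lem:isotropic-orbits}), given a primitive isotropic $w$ I would use unimodularity of $Q_{M_N}$ to choose $u$ with $Q_{M_N}(w,u)=1$. Replacing $u$ by $u+kw$ and, if the parity is wrong, by $u+z$ for a $z\in w^\perp$ of square $-1$ (such $z$ exists because the complement $\{w,u\}^\perp$ is odd, being definite unimodular of rank not divisible by $8$), I normalize $Q_{M_N}(u,u)=1$, so that $P:=\Z\{w,u\}$ has Gram matrix $\left(\begin{smallmatrix} 0 & 1 \\ 1 & 1\end{smallmatrix}\right)\cong\langle 1\rangle\oplus\langle -1\rangle$, with $w$ a primitive isotropic vector inside it. Then $\HH_{M_N}=P\oplus C$ with $C:=P^\perp$ negative definite unimodular of rank $N-1$. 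This is exactly where $N\le 8$ enters: since even definite unimodular lattices have rank divisible by $8$ and $1\le N-1\le 7$, the lattice $C$ is odd, and the classification of odd definite unimodular lattices of rank at most $7$ forces $C\cong\langle -1\rangle^{N-1}$. Matching this orthogonal splitting of $\HH_{M_N}$ against the standard one $\langle H,E_1\rangle\oplus\langle E_2,\dots,E_N\rangle$ (in which $v=H-E_1$ is primitive isotropic in the first summand), and using that $\OO(\langle 1\rangle\oplus\langle -1\rangle)$ acts transitively on its four primitive isotropic vectors, assembles $f\in\OO(\HH_{M_N})$ with $f(w)=v$. (Alternatively one could invoke Eichler's theorem for all $N\ge 2$, but the splitting argument is self-contained.)

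For (\ref{lem:minimal-genus}) I would first reduce to the primitive case. Writing $w=d\,w_0$ with $w_0$ primitive isotropic and $d\ge 1$, and assuming $w_0$ is represented by a smoothly embedded sphere $S_0$ (which has trivial normal bundle since $w_0^2=0$), I take $d$ parallel copies of $S_0$ inside a product neighborhood $S_0\times D^2$ and tube them in a chain by embedded $1$-handles; tubing distinct sphere components keeps the genus $0$, giving a connected sphere in class $d\,w_0=w$. So it suffices to represent every primitive isotropic $w_0$ by a sphere. By (\ref{lem:isotropic-orbits}), $w_0=f(v)$ for some $f\in\OO(\HH_{M_N})$, and $v$ is represented by a smooth sphere (the proper transform of a line through the first blown-up point); hence everything reduces to realizing $f$ by a diffeomorphism, i.e. to the surjectivity of $\Diff^+(M_N)\to\OO(\HH_{M_N})$ for $N\le 8$, after which pushing $S_0$ forward produces the desired sphere.

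This surjectivity is the crux and the main obstacle. For $N\le 8$ the lattice $\HH_{M_N}\cong\Z^{1,N}$ is reflective, so I would reduce to showing that $-\Id$ and the reflections $\RRef_u$ in classes with $u^2=-1$ or $u^2=-2$ are each induced by diffeomorphisms (the residual symmetries of the fundamental chamber being realized by permutations of the $E_i$, i.e. reorderings of the blown-up points). Reflections in $(-2)$-classes come from Dehn twists along embedded $(-2)$-spheres, which act on homology by the Picard--Lefschetz formula $x\mapsto x+(x\cdot u)u=\RRef_u(x)$. The element $-\Id$ and the $(-1)$-reflections are handled by complex conjugation, using the key (and easily mis-stated) fact that conjugation on $\CP^2$ or $\overline{\CP^2}$ is \emph{orientation-preserving} yet acts as $-\Id$ on $H_2$, since it fixes each complex line setwise while reversing its orientation. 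Global conjugation on $M_N$ realizes $-\Id$; for a $(-1)$-class $u$ I blow down its exceptional sphere to write $M_N\cong M'\#\overline{\CP^2}$ with $u$ the generator of the $\overline{\CP^2}$-summand and $u^\perp=H_2(M')$, take conjugation on that summand, modify it inside a ball to be the identity near the connecting $S^3$ (possible because $\pi_0\Diff^+(S^3)$ and $\pi_0\Diff(D^4,\partial)$ vanish), and extend by the identity over $M'$; the result acts as $-\Id$ on $\Z\{u\}$ and trivially on $u^\perp$, hence as $\RRef_u$. Composing diffeomorphisms realizing a factorization of $f$ then finishes the proof.
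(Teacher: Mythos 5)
Your part (a) is essentially the paper's own argument: both choose $u$ dual to $w$, split off the rank-$2$ sublattice $\Z\{w,u\}$, invoke the classification of definite unimodular lattices of rank at most $7$ (Mordell) to identify the complement with $\langle -1\rangle^{N-1}$, and then match orthogonal splittings. The only cosmetic difference is that you normalize $\Z\{w,u\}$ to the odd plane $\langle 1\rangle\oplus\langle -1\rangle \cong \Z\{H,E_1\}$, while the paper adjusts $u$ to make the plane even, matching $\Z\{v,s\}$ directly. For part (b) the routes genuinely diverge. The paper is citation-based: it invokes Wall's realization theorem (every isometry of $\HH_{M_N}$ is induced by a diffeomorphism when $N \le 9$) to reduce the minimal genus of $w = a w_1$ to that of $av = a(H-E_1)$, and then cites Li--Li for the fact that $a(H-E_1)$ has minimal genus $0$. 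You re-derive both inputs: the imprimitive case by tubing $d$ parallel copies of an embedded sphere with trivial normal bundle (correct, and a clean elementary substitute for the Li--Li citation), and the realization statement by Wall's own strategy (Dehn twists along $(-2)$-spheres giving Picard--Lefschetz reflections, global complex conjugation giving $-\Id$, and a localized conjugation on a $\overline{\CP^2}$-summand giving $(-1)$-reflections). What your route buys is self-containedness; what it costs is that the algebraic fact you assert without proof --- that $\OO(\Z^{1,N})$ for $N\le 8$ is generated by $(-1)$- and $(-2)$-reflections together with $-\Id$ and chamber symmetries realized by permutations --- is precisely the nontrivial algebraic core of Wall's theorem, so in effect you are still leaning on Wall-level input rather than eliminating it.

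One genuine error to correct, though it is repairable: your parenthetical appeal to the vanishing of $\pi_0\Diff(D^4,\partial)$ cites a statement that is not known --- it is an open problem, essentially equivalent to asking whether every orientation-preserving diffeomorphism of $S^4$ is smoothly isotopic to the identity. Fortunately you do not need it. Conjugation on $\overline{\CP^2}$ preserves a round ball about a real point and is linear there, so its restriction to the boundary $S^3$ is an element of $\SO(4)$; the connectivity of $\SO(4)$ (or Cerf's theorem $\pi_0\Diff^+(S^3)=1$, which you also cite and which alone suffices) lets you damp it to the identity across a collar and then extend by the identity over the other summand. This damping-in-a-collar construction is exactly what the paper itself does to build its diffeomorphisms $r_k$ realizing $\RRef_{e_k}$ (Definition~\ref{defn:rk}), via a path $\eta$ in $\SO(4)$ from $\diag(1,-1,1,-1)$ to the identity. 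With that citation replaced, your argument goes through.
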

\begin{proof}
The restriction of $Q_{M_N}$ to $\Z\{w, u\}$ is unimodular and indefinite so $\Z\{w, u\}^\perp$ is negative definite of rank $N-1 < 8$. By \cite{mordell}, $\Z\{w, u\}^\perp$ is isometric to $\Z\{e_1, \dots, e_{N-1}\}$; let $w_0 \in \Z\{w, u\}^\perp$ satisfy $Q_{M_N}(w_0,w_0) = -1$. 

With $a := Q_{M_N}(u,u)$, we have $Q_{M_N}(w, u-aw_0) = 1$ and 
\[
	Q_{M_N}(u-aw_0, u-aw_0) = a - a^2 \equiv 0 \pmod 2. 
\]
So $\Z\{w, u-aw_0\}$ is unimodular, even, and indefinite. By \cite{mordell} again, $\Z\{w, u-aw_0\}^\perp$ is negative definite and diagonal of rank $N-1$. There exists $f \in \OO(\HH_{M_N})$ that preserves the orthogonal direct sums below
\[
	f: \Z\{v, s\} \oplus \Z\{e_1, \dots, e_{N-1}\} \to \Z\{w, w_0-aw_1\} \oplus \Z\{w, w_0-aw_1\}^\perp
\]
such that $f(v) = w$. This proves \ref{lem:isotropic-orbits}.

To prove \ref{lem:minimal-genus}, we may assume that $w \neq 0$. Suppose $w_1\in H_2(M_N; \Z)$ is a primitive isotropic class such that $aw_1 = w$ for some $a \in \Z$. By \ref{lem:isotropic-orbits}, there exists some $f \in \Mod(M_N)$ such that $f(w_1) = v$. Because $N \leq 9$, there exists a diffeomorphism $\varphi \in \Diff^+(M_N)$ such that $[\varphi] = f$ by \cite[Theorem 2]{wall-diffeos}, and so the minimal genus of $w$ and the minimal genus of $av$ are equal, and the minimal genus of $av=a(H-E_1)$ is $0$ (cf. \cite[Theorem 4.2]{li--li}).
\end{proof}

\subsection{Lefschetz fibrations, conic bundles, and de Jonqui\'eres involutions}\label{sec:dejonq}
Let $N = 2m+1\geq 3$ be odd and fix some distinct complex numbers $a_1, \dots, a_{2m} \in \C$. Consider the birational map $\dJ_0: \CP^1 \times \CP^1 \dashrightarrow \CP^1 \times \CP^1$ given by
\[
	\dJ_0 : ([X_1:X_2],\, [Y_1:Y_2]) \mapsto \left([X_1:X_2], \,\left[ Y_2 \prod_{i=m+1}^{2m} (X_1 - a_i X_2) : Y_1 \prod_{i=1}^m (X_1 - a_i X_2)\right] \right). 
\]
Then $\dJ_0$ lifts to an automorphism $\dJ$ of order $2$ called a \emph{de Jonqui\'ere involution} of $X:= \Bl_P(\CP^1\times\CP^1)$ where
\[
	P := \{([a_i:1], \, [1:0]) : 1 \leq i \leq m \} \cup \{([a_i:1], \, [0:1] : m+1 \leq i \leq 2m)\}
\]
is a set of $2m$-many points in $\CP^1\times \CP^1$. Note that $X$ is diffeomorphic to $M_N$. Under this identification, $e_k \in H_2(M_N; \Z)$ is the class of the exceptional fiber above $([a_k:1], [1:0])$ for each $1 \leq k \leq m$ and the class of the exceptional fiber above $([a_k:1], [0:1])$ for each $m+1 \leq k \leq m$. 

Consider the projection map $\proj_0: \CP^1 \times \CP^1 \to \CP^1$ onto the first coordinate; it extends to a map $\proj: X \to \CP^1$ defining a holomorphic genus-$0$ Lefschetz fibration (in other words, a conic bundle). By construction, $\proj \circ \dJ = \proj$.

\begin{figure}
\centering
\includegraphics{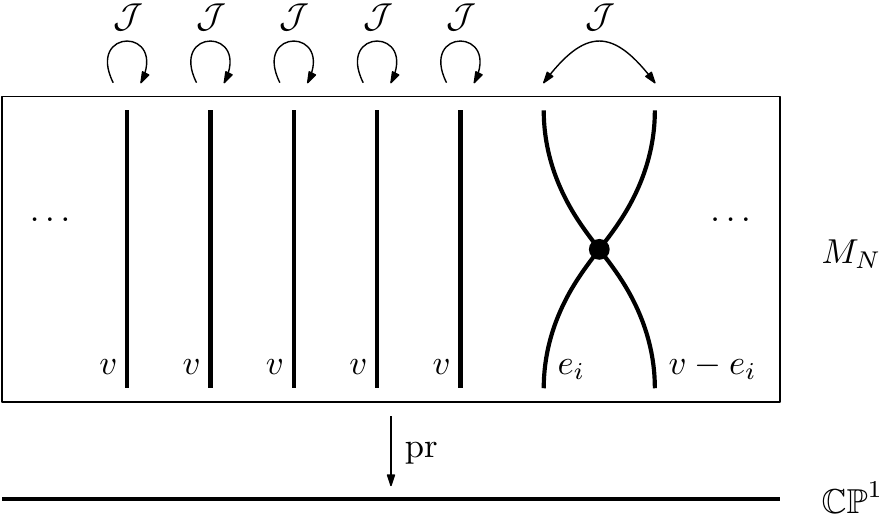}
\caption{Each line represents a copy of $\CP^1$ and is labeled with its homology class in $M_N$. The rightmost fiber, for $1 \leq i \leq N-1$, is a singular fiber. Each singular fiber is a union of two $(-1)$-spheres intersecting transversely once.}\label{fig:dejonq}
\end{figure}

If $z \neq z_k := [a_k:1] \in \CP^1$ for any $k$, the fiber of $\proj$ over a point $z \in \CP^1$ is $\{z\}\times \CP^1$ which is in the homology class $v \in H_2(M_N; \Z)$. Because $\dJ$ acts on each such $\proj^{-1}(z)$ in an orientation-preserving way, $[\dJ] \in \Stab(v) \leq \Mod(M_N)$. Moreover for all $1 \leq k \leq 2m$ and all $([a_k:1], [Y_1:Y_2]) \notin P$,
\[
	\dJ_0: ([a_k:1], [Y_1:Y_2]) \mapsto\begin{cases} 
	([a_k:1], [1:0]) & \text{if } 1 \leq k \leq m,\\
	([a_k:1], [0:1]) & \text{if }m+1 \leq k \leq 2m.
	\end{cases}
\]
Therefore, $[\dJ]$ must send the homology class $v - e_k$ of the strict transform of $\proj^{-1}([a_k:1])$ in $X$ to the exceptional divisor $e_k$. See Figure \ref{fig:dejonq} for an illustration of the action of $\dJ$ on the fibers of $\proj$. 

The maps $\proj$ and $\dJ$ described above will be used in the explicit constructions in Sections \ref{sec:parabolics} and \ref{sec:individual-stab-v}. The goal of the rest of this section is to show that it suffices to only consider the Lefschetz fibration $\proj:M_N \to \CP^1$ for our setting and to prove Proposition \ref{prop:no-lift-preserving-any-pi}. 
\begin{prop}\label{prop:no-other-lefschetz}
Let $p: M_N \to \Sigma$ be a Lefschetz fibration where $\Sigma$ is a closed, oriented surface and the generic fiber $F$ satisfies $[F] \neq 0 \in H_2(M_N; \Z)$. If $[F]$ has minimal genus $0$ then $\Sigma = \CP^1$ and $F = \CP^1$.
\end{prop}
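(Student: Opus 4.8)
The plan is to separate the statement into two independent claims---that the generic fiber $F$ is a sphere, and that the base $\Sigma$ is a sphere---and to treat each with a standard structural input. First I would record that the regular fibers of $p$ are pairwise disjoint and mutually homologous, so $[F]^2 = 0$; together with the hypothesis $[F] \neq 0$ this says that $[F]$ is a nonzero isotropic class realized by the embedded surface $F$, of some genus $g := g(F)$. It then remains to show $g = 0$ and $\Sigma \cong \CP^1$.

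The crux is showing $g = 0$, and here I would pass to the symplectic category. Since $[F] \neq 0$, Gompf's theorem produces a symplectic form $\omega$ on $M_N$ for which the generic fiber $F$ is a symplectic submanifold. By the symplectic Thom conjecture (Ozsv\'ath--Szab\'o), a symplectic surface minimizes genus among all smoothly embedded surfaces in its homology class, so $g(F)$ equals the minimal genus of $[F]$. By hypothesis this minimal genus is $0$, hence $g = 0$ and $F \cong S^2 = \CP^1$. I expect this to be the main obstacle---not because the argument is long, but because it rests on two substantial black boxes: the symplectization of a Lefschetz fibration with homologically essential fiber, and the fact that symplectic surfaces are genus-minimizing even in the case $b^+ = 1$ relevant here (as $M_N$ is rational). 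One could instead try to run the adjunction formula $2g - 2 = K_\omega \cdot [F]$ against an embedded minimal-genus sphere in $[F]$, but for $b^+ = 1$ the naive adjunction inequality fails precisely for square-zero sphere classes, so the Thom-conjecture formulation is the clean route.

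Finally, with $F \cong S^2$ in hand, $p: M_N \to \Sigma$ is a genus-$0$ Lefschetz fibration and I would identify $\Sigma$ via the fundamental group. Over the complement $\Sigma^\ast$ of the finitely many critical values, $p$ restricts to a fiber bundle with connected fiber, so $\pi_1(p^{-1}(\Sigma^\ast)) \to \pi_1(\Sigma^\ast)$ is surjective; since the inclusions $p^{-1}(\Sigma^\ast) \hookrightarrow M_N$ (complement of the codimension-$2$ singular fibers) and $\Sigma^\ast \hookrightarrow \Sigma$ (complement of finitely many points) are each $\pi_1$-surjective, a diagram chase gives that $p_\ast : \pi_1(M_N) \to \pi_1(\Sigma)$ is surjective. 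As $M_N$ is simply connected, $\pi_1(\Sigma) = 1$, and a closed oriented surface with trivial fundamental group is $S^2 = \CP^1$. Combining the two steps yields $\Sigma = \CP^1$ and $F = \CP^1$, as claimed.
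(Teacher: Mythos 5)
Your symplectic step coincides exactly with the paper's: since $[F]\neq 0$ and $H_2(M_N;\Z)$ is torsion-free, Gompf's theorem (\cite[Theorem 10.2.18]{gompf--stipsicz}, \cite[Theorem 1.2]{gompf2}) makes $F$ a symplectic surface, and the symplectic Thom conjecture \cite[Theorem 1.1]{oszvath--szabo} then forces $F$ to realize the minimal genus, which is $0$ by hypothesis. Your identification of the base is a reasonable hands-on replacement for the paper's citation of \cite[Proposition 8.1.9]{gompf--stipsicz}. The gap is that both of your steps silently assume the generic fiber is \emph{connected}, and in the paper's conventions this is not part of the definition of a Lefschetz fibration --- the entire first paragraph of the paper's proof exists precisely to establish it. If $F$ were disconnected, then ``$g(F)$'' is not defined and Ozsv\'ath--Szab\'o, which concerns connected embedded representatives, does not give what you need; and your fiber-bundle argument also fails, since a bundle with disconnected fiber need not have $\pi_1$-surjective projection (any nontrivial connected covering map, e.g.\ the double cover $S^1\times S^2 \to S^1$ with fiber $S^2\sqcup S^2$, is a counterexample). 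Worse, the dependency in your write-up is circular: Step 3 borrows connectedness from Step 2's conclusion $F\cong S^2$, while Step 2 presupposed a connected $F$ in order to speak of its genus.

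The repair is to run the topology \emph{before} the symplectic argument, which is what the paper does. The monodromy of $p$ over $\Sigma^* := \Sigma - \{\text{critical values}\}$ gives an action of $\pi_1(\Sigma^*)$ on $\pi_0(F)$: it is transitive because $p^{-1}(\Sigma^*)$, the complement in $M_N$ of finitely many $2$-dimensional singular fibers, is connected; it descends to an action of $\pi_1(\Sigma)$ because each local monodromy is a Dehn twist along a vanishing cycle and hence fixes every component; and the stabilizer of the basepoint component is the image of $\pi_1(M_N)=1$. So $\pi_1(\Sigma)$ is in bijection with the finite set $\pi_0(F)$, and a closed orientable surface with finite fundamental group is $S^2$; hence $\Sigma=\CP^1$ and then $\pi_0(F)=\pi_1(\CP^1)=1$, i.e.\ $F$ is connected. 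This is exactly the content of \cite[Proposition 8.1.9]{gompf--stipsicz} that the paper invokes; with it in place (or if you explicitly adopt the convention that Lefschetz fibrations have connected fibers), your symplectic step finishes the proof and your Step 3 becomes redundant.
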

\begin{proof}
Because $M_N$ is closed, a generic fiber $F$ is a compact submanifold of $M$ and has finitely many connected components. By \cite[Proposition 8.1.9]{gompf--stipsicz}, there is a bijection $\pi_1(\Sigma) \to \pi_0(F)$ because $\pi_1(M_N) = 0$. Therefore, $\Sigma = \CP^1$ and $F$ is connected since $\pi_0(F) = \pi_1(\CP^1) = 0$.

Because $[F]$ is nontrivial, $M_N$ can be given a symplectic structure such that $F$ is a symplectic submanifold (Gompf \cite[Theorem 10.2.18]{gompf--stipsicz}, \cite[Theorem 1.2]{gompf2}) and so $F$ must achieve the minimal genus in its homology class by the symplectic Thom conjecture (Oszv\'ath--Szab\'o \cite[Theorem 1.1]{oszvath--szabo}). 
\end{proof}

\begin{proof}[{Proof of Proposition \ref{prop:no-lift-preserving-any-pi}}]
Suppose there exists such a Lefschetz fibration $p: M_N \to \Sigma$ and a homeomorphism $h : \Sigma \to \Sigma$ with $p \circ \varphi = h \circ p$. Proposition \ref{prop:no-other-lefschetz} says that that $\Sigma = \CP^1$ and the generic fiber of $p$ has genus $0$. After blowing down the $(-1)$-spheres contained in the fibers of $p$, we see that $p$ must be a $\CP^1$-bundle over $\Sigma$ by \cite[Proposition 8.1.7]{gompf--stipsicz}. Because all $\CP^1$-bundles over $\CP^1$ are holomorphic, $M_N$ gets a complex structure as a rational surface and $p$ is holomorphic. 

We prove by induction on $N$ that if some homeomorphism $\varphi \in \Homeo^+(M_N)$ preserves a genus-$0$ Lefschetz fibration $p: M_N \to \CP^1$ then $[\varphi] \in \Mod(M_N)$ has finite order. If $N = 1$ then $\Mod(M_N)$ is finite. Now assume for some $N_0 > 1$ that the claim holds for any $1 \leq N < N_0$.

Let $N = N_0$ and suppose $\varphi \in \Homeo^+(M_N)$ preserves a genus-$0$ Lefschetz fibration $p: M_N \to \CP^1$. Then $\varphi$ must permute the singular fibers because none of the singular fibers are homeomorphic to a generic fiber $\CP^1$. There are finitely many singular fibers, so some power $\varphi^k$ must preserve each singular fiber. Each singular fiber $F$ of $p$ is a union of finitely many spheres of negative self-intersection intersecting transversely at finitely many points $q_1, \dots, q_m$. Because $\varphi^k$ restricts to a homeomorphism of each singular fiber, $\varphi^k$ must permute the points $q_1, \dots, q_m$. Moreover, $\varphi^k$ also restricts to a homeomorphism on $F - \{q_1, \dots, q_m\}$, a disjoint union of finitely many spheres with punctures. Therefore, a further power $\varphi^{k\ell}$ must preserve each component of $F - \{q_1, \dots, q_m\}$ and its orientation.

Let $S \subseteq F$ be an embedded $(-1)$-sphere in $M_N$ which only intersects one other sphere $S_0 \subseteq F$ of negative self-intersection, at the point $q_m$. Because $\varphi^{k\ell}$ fixes $q_m$ and preserves $S - \{q_m\} \subseteq F - \{q_1, \dots, q_m\}$, the homeomorphism $\varphi^{k\ell}$ must preserve $S \subseteq M_N$. Let $b: M_N \to M$ be the map that blows down $S$ to a point $q \in M$. Because $M$ is a rational surface, $M$ is diffeomorphic to $M_{N-1}$ or $\CP^1\times \CP^1$. 

Because $\varphi^{k\ell}$ defines a homeomorphism on $M_N - S$, it induces a homeomorphism of $M - q$ that extends to a homeomorphism $\psi$ of $M$ and preserves the Lefschetz fibration $p': M \to \CP^1$ such that $p = p' \circ b$. If $M \cong M_{N-1}$ then $[\psi]$ has finite order in $\Mod(M)$ by the inductive hypothesis. Otherwise, $M \cong \CP^1\times \CP^1$ and so $\Mod(M)$ is finite. Therefore, $[\psi]$ also has finite order in $\Mod(M)$. 

Finally, note that $b_*: H_2(M_N; \Z) \to H_2(M; \Z)$ induces the quotient map
\[
	H_2(M_N; \Z) \cong \Z\{[S]\}^\perp \oplus \Z\{[S]\} \to \Z\{[S]\}^\perp \cong H_2(M;\Z).
\]
Because $\psi = b \circ \varphi^{k\ell}$ and $\varphi_*^{k\ell}([S]) = [S]$, the restriction of $\varphi_*^{k\ell}$ to $\Z\{[S]\}^\perp$ must have the same order as $\psi_*$. Finally, this shows that $[\varphi^{k\ell}]$, and therefore $[\varphi]$, has finite order in $\Mod(M_N)$. 
\end{proof}

\section{Theorem \ref{thm:parabolics}: Lifting $\Lambda_w$ to $\Diff^+(M_N)$}\label{sec:thm-parabolics}\label{sec:parabolics}
This section is dedicated to the proof of Theorem \ref{thm:parabolics}. Before proceeding with the proof, we fix notation regarding certain subsets of $\CP^1$ illustrated in Figure \ref{fig:sets-in-p1}. Let $N = n+1$ and $m = \lceil \frac n2 \rceil$ so that if $n$ is even, $n+1 = 2m+1$ and if $n$ is odd, $n = 2m+1$. Fix distinct complex numbers $a_1, \dots, a_{2m} \in \C$ and let $z_k := [a_k : 1]$ for all $k = 1, \dots, n$. Then
\begin{enumerate}[(a)]
\item for each $1 \leq k \leq n-1$, let $B_k \cong \mathbb D^2$ denote a closed disk in $\CP^1 - \{[1:0]\}$ containing $z_k$ and $z_{k+1}$ and no other points $z_j$ for $j \neq k, k+1$ so that $B_k \cap B_{k'} = \emptyset$ if $\lvert k - k' \rvert > 1$, 
\item for each $1 \leq k \leq n-1$, let $U_k \cong [0, 1] \times \sphere^1 \subseteq B_k$ denote a collar neighborhood of $B_k$ where $\{0\} \times \sphere^1$ corresponds to $\partial B_k$, and
\item for each $1 \leq k \leq n$, let $V_k\cong \mathbb D^2$ denote a closed disk in $B_k$ if $k = 1$ or in $(B_k-U_k) \cap (B_{k-1}-U_{k-1})$ if $k \geq 2$ with $z_k \in V_k$.
\end{enumerate}

\begin{figure}
\centering
\includegraphics[width=0.65\textwidth]{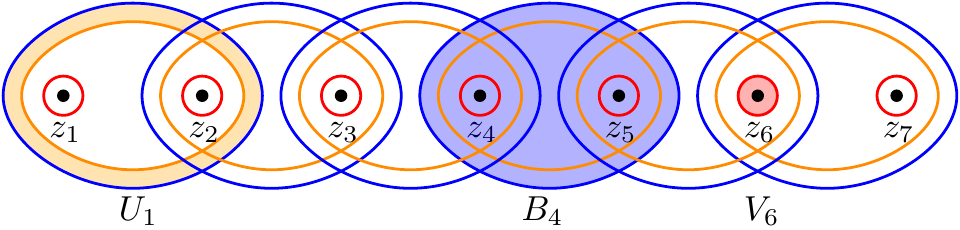}
\caption{This figure depicts the sets $U_k$, $B_k$, and $V_k$ in the case $n = 7$. In particular, $B_4$ is shaded in blue and surrounds $z_4$ and $z_5$. The annulus $U_1$, shaded in orange, is the collar neighborhood of $B_1$ surrounding $z_1$ and $z_2$. The disk $V_6$, shaded in red, contains $z_6$ and contained in $(B_5-U_5) \cap (B_6-U_6)$.} \label{fig:sets-in-p1}
\end{figure}

As in Section \ref{sec:dejonq}, let
\[
	P = \{([a_i:1], \, [1:0]) : 1 \leq i \leq m \} \cup \{([a_i:1], \, [0:1]) : m+1 \leq i \leq 2m\}
\]
and consider the de Jonqui\'ere involution $\dJ$ on $\Bl_P(\CP^1 \times \CP^1)$. Identify $M_{n+1}$ with
\begin{enumerate}[(a)]
	\item $\Bl_P(\CP^1 \times \CP^1)$ if $n$ is even, and
	\item $\Bl_{P-\{([a_{2m} : 1], \, [0:1])\}}(\CP^1\times \CP^1)$ if $n$ is odd.
\end{enumerate}
In both cases, consider $\proj: M_{n+1} \to \CP^1$ defined in Section \ref{sec:dejonq}. There is a natural inclusion
\[
	\proj^{-1}(B_k) \hookrightarrow \Bl_P(\CP^1\times \CP^1)
\]
that is preserved by $\dJ$ on $\Bl_P(\CP^1\times \CP^1)$ for all $1 \leq k \leq n-1$. We use this inclusion to define $\dJ|_{\proj^{-1}(B_k)}$ on each $\proj^{-1}(B_k) \subseteq M_{n+1}$ regardless of the parity of $n$. Note that $\dJ|_{\proj^{-1}(B_k)} = \dJ|_{\proj^{-1}(B_{k+1})}$ when restricted to $\proj^{-1}(B_k) \cap \proj^{-1}(B_{k+1})$ for all $1\leq k \leq n-2$.

There are four main steps to the proof of Theorem \ref{thm:parabolics}.
\begin{enumerate}[(1)]
\item Construct commuting diffeomorphisms $\gamma_1, \dots, \gamma_{n-1} \in \Diff^+(M_N)$ that preserve the genus-$0$ holomorphic Lefschetz fibration $\proj: M_N \to \CP^1$ such that $\supp(\gamma_k) \subseteq \proj^{-1}(B_k)$ and $\gamma_k$ agrees with $\dJ$ on $\proj^{-1}(B_k - U_k)$ for each $1 \leq k \leq n-1$. These maps should be thought of as \emph{local de Jonqui\'eres maps}.
\item Construct commuting diffeomorphisms $r_1, \dots, r_n \in \Diff^+(M_N)$ with $\supp(r_k) \subseteq \proj^{-1}(V_k)$ so that $[r_k] = \RRef_{e_k}$ for each $1 \leq k \leq n$.
\item Define a homomorphism $\rho_v: \Lambda_v \to \Diff^+(M_N)$ using the diffeomorphisms above so that $\rho_v$ is a section of $q: \Diff^+(M_N) \to \Mod(M_N)$ and $\rho_v(\Lambda_v)$ almost preserves $\proj: M_N \to \CP^1$.
\item Define a homomorphism $\rho_w: \Lambda_w \to \Diff^+(M_N)$ for any other primitive, isotropic class $w$ of minimal genus $0$ by pre- and post-composing $\rho_v$ by conjugation in $\Mod(M_N)$ and $\Diff^+(M_N)$.
\end{enumerate}

\subsection{Step 1: Constructing local de Jonqui\'eres maps $\gamma_1, \dots, \gamma_{n-1}\in \Diff^+(M_N)$}
For each $1\leq k \leq 2m$, define $\lambda_k: U_k \to \C^\times$ by 
\[
	\lambda_k(x) := \sqrt{\frac{\prod_{i=1}^m (x-z_i)}{\prod_{i=m+1}^n (x-z_i)}}
\]
with any smooth choice of square root. Such a choice is well-defined because $U_k$ is an annulus surrounding but not containing two points $z_k$ and $z_{k+1}$. Moreover, $x - z_i \neq 0$ because $z_i \notin U_k$ for any $1 \leq i \leq n$. For such a choice of $\lambda_k$, consider the map $M_{\lambda_k}: U_k \to \PGL_2(\C)$ given by 
\[
	M_{\lambda_k}(x) = \begin{pmatrix}
	1 & 1 \\ -\lambda_k(x) & \lambda_k(x)
	\end{pmatrix}\in \PGL_2(\C).
\]
We also record the inverse of $M_{\lambda_k}(x)$ for later use:
\[
	M_{\lambda_k}(x)^{-1} = \begin{pmatrix}
	1 & - \frac{1}{\lambda_k(x)} \\
	1 & \frac{1}{\lambda_k(x)}
	\end{pmatrix} \in \PGL_2(\C).
\]
Viewing $M_{\lambda_k}(x)$ and $M_{\lambda_k}(x)^{-1}$ as automorphisms of $\CP^1$, define a diffeomorphism $u_{\lambda_k}$ of $\proj^{-1}(U_k) = U_k \times \CP^1$ by
\[
	u_{\lambda_k}([x:1], [Y_1:Y_2]) = ([x:1], \, M_{\lambda_k}(x) \cdot [Y_1:Y_2]).
\]

Let $T: [0,1] \to [0, 1]$ be a smooth, nondecreasing function such that $T|_{[0, \varepsilon]} \equiv 0$ and $T|_{[1-\varepsilon, 1]} \equiv 1$ for some $0 < \varepsilon \ll 1$. Identifying $\proj^{-1}(U_k) = U_k \times \CP^1$ with $[0, 1] \times \partial B_k \times \CP^1$ (cf. Figure \ref{fig:sets-in-p1}), let $j_k$ be a diffeomorphism of $\proj^{-1}(U_k)$ defined by
\[
	j_k(t,\, \theta,\, [Y_1:Y_2] ) = (t, \, \theta, \, [e^{\sqrt{-1} \pi T(t)} Y_1 : Y_2]). 
\]
Roughly, $j_k$ is a map on $[0, 1] \times \sphere^1 \times \CP^1$ induced by an isotopy of $\sphere^1\times \CP^1$ from the $\Id \times \Id$ to $\Id \times R(\pi)$, where $R(\pi)$ is a rotation-by-$\pi$ map on $\CP^1$. See Figure \ref{fig:unraveling}.

\begin{figure}
\centering
\includegraphics[width=\textwidth]{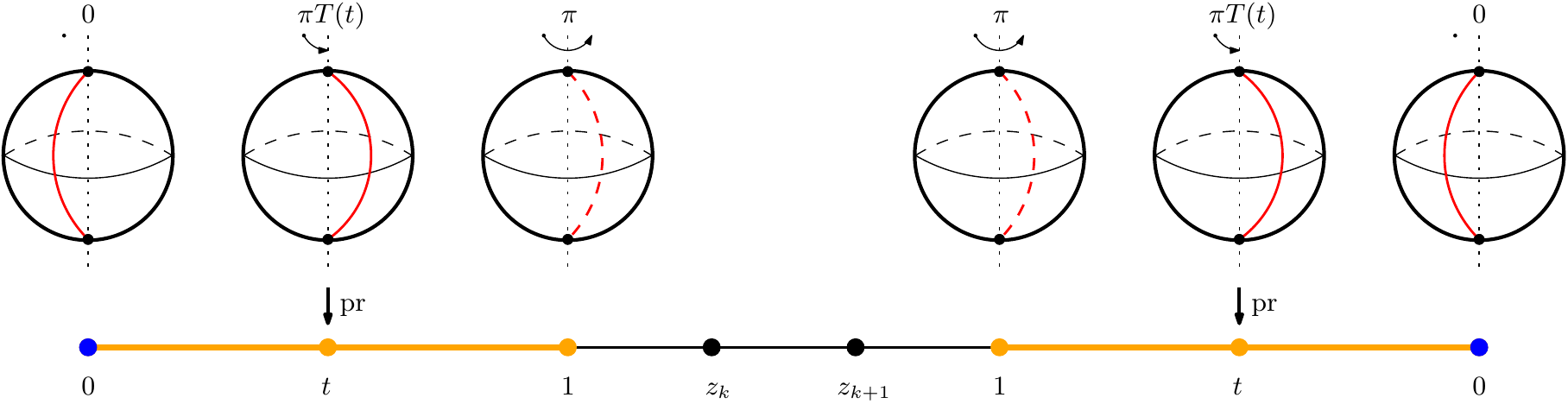}
\caption{This figure illustrates the action of $j_k$ on $\proj^{-1}(U_k) \cong [0, 1] \times \partial B_k \times \CP^1$. The horizontal line represents $B_k \subseteq \CP^1$ and the orange portion represents the annulus $U_k \subseteq B_k \subseteq \CP^1$ whose width is parametrized by $t \in [0, 1]$. The blue points represent $\partial B_k \cong \sphere^1$. The diffeomorphism $j_k$ acts by rotation-by-$\pi T(t)$ on the spheres lying above a point $(t, x) \in [0, 1 ]\times \partial B_k \cong U_k$.}\label{fig:unraveling}
\end{figure}
In the next lemma, we use the fact that the de Jonqui\'eres map is conjugate to $\Id\times R(\pi)$ on each $\{t\} \times \partial B_k \times \CP^1$ to modify $\dJ|_{\proj^{-1}(U_k)}$ to be the identity near the boundary $\proj^{-1}(\partial B_k)$. 
\begin{lem}\label{lem:diffeo-bdry}
Let $1 \leq k \leq n-1$. On $\proj^{-1}(U_k) = [0, 1] \times \partial B_k \times \CP^1$,
\[
	u_{\lambda_k} \circ j_k \circ u_{\lambda_k}^{-1} = \begin{cases}
	\dJ & \text{ on }\proj^{-1}([1-\varepsilon, 1] \times \partial B_k), \\
	\Id & \text{ on }\proj^{-1}([0, \varepsilon] \times \partial B_k).
	\end{cases}
\]
\end{lem}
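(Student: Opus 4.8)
The plan is to reduce the identity to a fiberwise computation in $\PGL_2(\C)$ and then to match the resulting Möbius transformation with the fiber action of the de Jonqui\'eres involution recorded in Section \ref{sec:dejonq}. The first observation is that all three maps $u_{\lambda_k}$, $j_k$, and $u_{\lambda_k}^{-1}$ fix the base coordinate of $\proj^{-1}(U_k) = [0,1]\times\partial B_k\times\CP^1$ and act only on the $\CP^1$-fiber. Hence the conjugate $u_{\lambda_k}\circ j_k\circ u_{\lambda_k}^{-1}$ can be computed fiber-by-fiber: over the base point $(t,\theta)$ corresponding to $x\in U_k$, it acts on the fiber as $M_{\lambda_k}(x)\,J_t\,M_{\lambda_k}(x)^{-1}$, where $J_t\in\PGL_2(\C)$ is the fiber action $[Y_1:Y_2]\mapsto[e^{\sqrt{-1}\pi T(t)}Y_1:Y_2]$ of $j_k$. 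Since $J_t$ depends on $t$ only through $T(t)$, and $T$ is constant on each of the two relevant regions, it suffices to evaluate this expression there. On $\proj^{-1}([0,\varepsilon]\times\partial B_k)$ we have $T(t)\equiv 0$, so $J_t=\Id$ and the conjugate is the identity, which settles the second case at once.

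For the first case, on $\proj^{-1}([1-\varepsilon,1]\times\partial B_k)$ we have $T(t)\equiv 1$, so $J_t$ is the projective class of $\diag(-1,1)$, i.e. $[Y_1:Y_2]\mapsto[-Y_1:Y_2]$. I would then carry out the product $M_{\lambda_k}(x)\cdot\diag(-1,1)\cdot M_{\lambda_k}(x)^{-1}$ using the explicit formulas for $M_{\lambda_k}(x)$ and its inverse recorded above. The diagonal entries vanish and one is left, up to an overall scalar, with the antidiagonal matrix $\begin{pmatrix} 0 & 1/\lambda_k(x) \\ \lambda_k(x) & 0\end{pmatrix}$, which as a Möbius transformation is $[Y_1:Y_2]\mapsto[Y_2:\lambda_k(x)^2\,Y_1]$.

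The last step is to identify this with $\dJ$, and it is the only place requiring any care. Substituting $\lambda_k(x)^2 = \prod_{i=1}^m (x-z_i)\big/\prod_{i=m+1}^n (x-z_i)$ and clearing the denominator, the transformation becomes $[Y_1:Y_2]\mapsto\bigl[Y_2\prod_{i=m+1}^n(x-z_i):Y_1\prod_{i=1}^m(x-z_i)\bigr]$, which is exactly the fiber formula for $\dJ_0$, and hence $\dJ$, in the affine chart $X_1/X_2=x$ (reading $z_i=[a_i:1]$ as $a_i$). I expect the main obstacle to be purely bookkeeping: confirming that the choice of smooth square-root branch for $\lambda_k$ is irrelevant because only $\lambda_k^2$ survives the conjugation, and checking that the product ranges $1\le i\le m$ and $m+1\le i\le n$ align with the de Jonqui\'eres formula under the parity conventions fixed at the start of the section. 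No genuine difficulty arises beyond this matrix identity and index matching.
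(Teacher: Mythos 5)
Your proposal is correct and follows essentially the same route as the paper's proof: reduce to a fiberwise computation, note $j_k$ acts by $\Id$ or $\diag(-1,1)$ on the two regions, conjugate by $M_{\lambda_k}(x)$ to get the antidiagonal matrix $\begin{pmatrix} 0 & 1 \\ \lambda_k(x)^2 & 0 \end{pmatrix} \in \PGL_2(\C)$, and identify $[Y_1:Y_2] \mapsto [Y_2 : \lambda_k(x)^2 Y_1]$ with $\dJ$. Your extra remarks (independence of the square-root branch, matching the product ranges with the de Jonqui\'eres formula) are correct bookkeeping that the paper leaves implicit.
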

\begin{proof}
On $\proj^{-1}([0, \varepsilon] \times \partial B_k)$, note that $j_k \equiv \Id$. On $\proj^{-1}([1-\varepsilon, 1] \times \partial B_k)$, 
\[
	j_k([x:1], [Y_1:Y_2]) = ([x:1], [-Y_1:Y_2]).
\]
For all $[x:1] \in [1-\varepsilon, 1] \times \partial B_k \subseteq U_k$ 
\[
	M_{\lambda_k}(x) \begin{pmatrix}
	-1 & 0 \\
	0 & 1
	\end{pmatrix} M_{\lambda_k}(x)^{-1} =  \begin{pmatrix}
	0 & 1 \\ \lambda_k(x)^2 & 0
	\end{pmatrix} \in \PGL_2(\C),
\]
and so 
\begin{align*}
u_{\lambda_k} \circ j_k \circ u_{\lambda_k}^{-1}([x:1], [Y_1:Y_2]) &= \left([x:1], [Y_2 : \lambda_k(x)^2 Y_1]\right) = \dJ([x:1], [Y_1:Y_2]).\qedhere
\end{align*}
\end{proof}

The diffeomorphisms $\gamma_k$ below should be thought of as \emph{local} de Jonqui\'eres maps, acting only on a single pair of singular fibers of $\proj$. 
\begin{defn}
For any $1 \leq k \leq n-1$, let $\gamma_k$ be the diffeomorphism
\[
	\gamma_k = \begin{cases}
	\dJ &\text{ on }\proj^{-1}(B_k - U_k) \\
	u_{\lambda_k} \circ j_k \circ u_{\lambda_k}^{-1} &\text{ on }\proj^{-1}(U_k) \\
	\Id &\text{ on }\proj^{-1}(\CP^1-B_k).
	\end{cases}
\]
\end{defn}

\begin{prop}\label{prop:diffeo-properties}
The diffeomorphisms $\gamma_k$ satisfy the following properties: 
\begin{enumerate}[(a)]
	\item The diffeomorphism $\gamma_k$ preserves $\proj$ for all $1 \leq k \leq n-1$. In particular, $\proj \circ \gamma_k = \proj$.\label{prop:diffeo-property-1}
	\item The diffeomorphisms $\gamma_i$ and $\gamma_j$ commute for all $1 \leq i, j\leq n-1$. \label{prop:diffeo-property-2}
	\item As mapping classes, $[\gamma_k] = \RRef_{v-e_k-e_{k+1}} \circ \RRef_{e_k - e_{k+1}}$ for all $1 \leq k \leq n-1$. \label{prop:diffeo-property-3}
\end{enumerate}
\end{prop}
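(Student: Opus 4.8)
The plan is to use that every $\gamma_k$ preserves the fibration $\proj$ and acts on each fiber $\proj^{-1}(x)\cong\CP^1$ by an explicit M\"obius transformation, so that all three parts reduce to fiberwise statements. Part \ref{prop:diffeo-property-1} I would read straight off the definition: on $\proj^{-1}(B_k-U_k)$ we have $\gamma_k=\dJ$ and $\proj\circ\dJ=\proj$ by the construction in Section \ref{sec:dejonq}; on $\proj^{-1}(U_k)$ the map $\gamma_k=u_{\lambda_k}\circ j_k\circ u_{\lambda_k}^{-1}$ fixes the base coordinate, since $u_{\lambda_k}([x:1],\cdot)=([x:1],\cdot)$ and $j_k(t,\theta,\cdot)=(t,\theta,\cdot)$ act only on the $\CP^1$-factor; and on $\proj^{-1}(\CP^1-B_k)$ it is the identity. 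Hence $\proj\circ\gamma_k=\proj$ everywhere.

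For part \ref{prop:diffeo-property-2} I would split into two cases. If $|i-j|\geq 2$ then $B_i\cap B_j=\emptyset$, so $\supp(\gamma_i)$ and $\supp(\gamma_j)$ are disjoint and the maps commute. The substantive case is $|i-j|=1$, where the supports overlap over $\proj^{-1}(B_i\cap B_{i+1})$. The key observation I would establish is that for every generic $x$, the transformation of $\proj^{-1}(x)$ induced by each $\gamma_k$ is diagonal in the single unordered eigenbasis $\{(1,\lambda(x)),(1,-\lambda(x))\}$, where $\lambda(x)^2=\prod_{i=1}^m(x-z_i)\big/\prod_{i=m+1}^n(x-z_i)$ is branch-independent: off $B_k$ the map is the identity; on $B_k-U_k$ it equals $\dJ$, which by the de Jonqui\'eres formula (cf. Lemma \ref{lem:diffeo-bdry}) acts as $\left(\begin{smallmatrix}0&1\\ \lambda(x)^2&0\end{smallmatrix}\right)$ with eigenvectors $(1,\pm\lambda(x))$; and on $U_k$ it equals $M_{\lambda_k}(x)\,\diag(e^{\sqrt{-1}\pi T(t)},1)\,M_{\lambda_k}(x)^{-1}$, whose conjugating columns are exactly $(1,\pm\lambda_k(x))$. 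Since $\lambda_i$ and $\lambda_{i+1}$ agree up to the choice of square-root branch, the branch ambiguity only permutes the two eigenlines, so $\gamma_i$ and $\gamma_{i+1}$ restrict to simultaneously diagonalizable, hence commuting, transformations of every generic fiber; continuity together with density of the generic fibers then yields $\gamma_i\gamma_{i+1}=\gamma_{i+1}\gamma_i$ on all of $M_N$.

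For part \ref{prop:diffeo-property-3} I would compute $[\gamma_k]_*$ on the basis $\{s,v,e_1,\dots,e_{N-1}\}$ and match it against $\RRef_{v-e_k-e_{k+1}}\circ\RRef_{e_k-e_{k+1}}$. By part \ref{prop:diffeo-property-1}, $[\gamma_k]\in\Stab(v)$, so $[\gamma_k]_*(v)=v$; and since $\supp(\gamma_k)\subseteq\proj^{-1}(B_k)$ while $z_j\notin B_k$ for $j\neq k,k+1$, the class $e_j$ is fixed for all such $j$. The two singular fibers $\proj^{-1}(z_k)$ and $\proj^{-1}(z_{k+1})$ lie entirely in $\proj^{-1}(B_k-U_k)$ because $z_k,z_{k+1}\in B_k-U_k$, so there $\gamma_k$ restricts to $\dJ$ and swaps the two components of each singular fiber; using $[\dJ]_*(e_k)=v-e_k$ from Section \ref{sec:dejonq} this gives $[\gamma_k]_*(e_k)=v-e_k$ and $[\gamma_k]_*(e_{k+1})=v-e_{k+1}$. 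A direct reflection computation (using $Q_{M_N}(v,e_j)=0$, $Q_{M_N}(e_i,e_j)=-\delta_{ij}$) shows $\RRef_{v-e_k-e_{k+1}}\circ\RRef_{e_k-e_{k+1}}$ sends $e_k\mapsto v-e_k$, $e_{k+1}\mapsto v-e_{k+1}$, and fixes $v$ and every $e_j$ with $j\neq k,k+1$. As both maps lie in $\Stab(v)$ and agree on $e_1,\dots,e_{N-1}$, Lemma \ref{lem:stab-v-determined} forces $[\gamma_k]=\RRef_{v-e_k-e_{k+1}}\circ\RRef_{e_k-e_{k+1}}$.

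The main obstacle is the overlapping-support case of part \ref{prop:diffeo-property-2}: away from the $V_{k+1}$-region, where both maps are literally $\dJ$, the two local maps can simultaneously be nontrivial collar maps, so their commuting is not formal. The crux is to recognize that $\dJ$ and the interpolating collar maps attached to both $U_i$ and $U_{i+1}$ are all diagonal in the one eigenbasis $\{(1,\pm\lambda(x))\}$, with the square-root ambiguity in the $\lambda_k$ merely interchanging the two eigenlines; verifying this common diagonalization is the heart of the argument. A minor technical point to check carefully in part \ref{prop:diffeo-property-3} is that both homology classes $e_k$ and $v-e_k$ are represented by cycles inside $\proj^{-1}(B_k-U_k)$, so that the local identity $\gamma_k=\dJ$ genuinely computes the homological swap rather than the global action of $\dJ$.
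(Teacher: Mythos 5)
Your proposal is correct and follows essentially the same route as the paper: part (a) read off the definition, part (b) split into the disjoint-support case and the adjacent case with a fiberwise M\"obius-transformation analysis over $B_i\cap B_{i+1}$, and part (c) via the homological action on $v, e_1,\dots,e_{N-1}$ combined with Lemma \ref{lem:stab-v-determined}. Your ``common unordered eigenbasis $\{(1,\pm\lambda(x))\}$'' observation is just a cleaner packaging of the paper's explicit matrix identities (the paper's computation that switching the branch $\lambda\mapsto-\lambda$ amounts to swapping the two diagonal entries is exactly the statement that the branch ambiguity only permutes the two eigenlines), and your density-of-generic-fibers step plays the role of the paper's direct check that both maps equal $\dJ$ on $\proj^{-1}(\mathcal V_i)$.
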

\begin{proof}
For each $k$, $\proj\circ u_{\lambda_k} = \proj$ and $\proj \circ j_k = \proj$ by construction of $u_{\lambda_k}$ and $j_k$ when restricted to $\proj^{-1}(U_k)$. Therefore,
\[
	(\proj \circ \gamma_k)|_{\proj^{-1}(U_k)} = \left(\proj \circ (u_{\lambda_k} \circ j_k \circ u_{\lambda_k}^{-1}))\right|_{\proj^{-1}(U_k)} = \proj|_{\proj^{-1}(U_k)}
\]
and $\gamma_k$ preserves the fibers of $\proj$ on $\proj^{-1}(U_k)$ for all $k$. The same is obviously true on $\proj^{-1}(\CP^1 - B_k)$ and true on $\proj^{-1}(B_k - U_k)$ by construction of $\dJ$. This proves \ref{prop:diffeo-property-1}.

If $\lvert i-j \rvert > 1$ then $\supp(\gamma_i) \cap \supp(\gamma_j) = \emptyset$ so $\gamma_i$ and $\gamma_j$ commute. To show that $\gamma_i$ and $\gamma_{i+1}$ commute, we will consider the action of these two diffeomorphisms on 
\[
	\proj^{-1}(B_i \cap B_{i+1}) = \proj^{-1}(\underbrace{(B_i \cap B_{i+1}) \cap (U_i \cup U_{i+1})}_{=: \mathcal C_{i}}) \cup \proj^{-1}(\underbrace{(B_i \cap B_{i+1}) - (U_i \cup U_{i+1})}_{=: \mathcal V_{i}})
\]
which contains $\supp(\gamma_i) \cap \supp(\gamma_{i+1})$. See Figure \ref{fig:commuting}.

\begin{figure}
\centering
\includegraphics[width=0.25\textwidth]{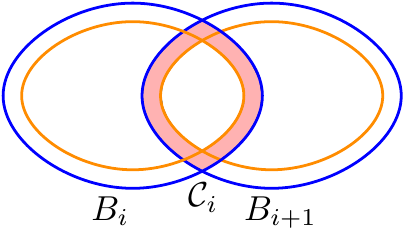}
$\quad\quad$
\includegraphics[width=0.25\textwidth]{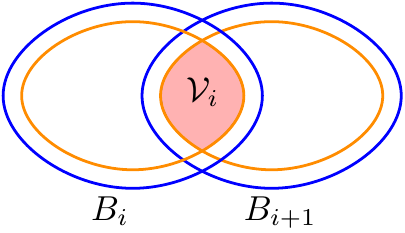}
\caption{For each $1 \leq i \leq n-1$, the sets $\mathcal C_i$ (left) and $\mathcal V_i$ (right) are contained in $B_i \cap B_{i+1}$.}\label{fig:commuting}
\end{figure}

By construction,
\[
	\gamma_i|_{\proj^{-1}(\mathcal V_{i})}= \dJ|_{\proj^{-1}(\mathcal V_{i})} = \gamma_{i+1}|_{\proj^{-1}(\mathcal V_{i})} 
\]
and so $\gamma_i$ and $\gamma_{i+1}$ commute on $\proj^{-1}(\mathcal V_{i})$.

For any $[x:1] \in \mathcal C_i$, both $\gamma_i$ and $\gamma_{i+1}$ act on $\proj^{-1}([x:1])$ by \ref{prop:diffeo-property-1}. If $[x:1] \in U_i \cap U_{i+1}$ then for some $t, T \in [0, 1]$ depending on $x$,
\begin{align*}
\gamma_i([x:1], [Y_1:Y_2]) &= \left([x:1], \,\left(M_{\lambda_i}(x) \begin{pmatrix} e^{\sqrt{-1}\pi t} & 0 \\ 0 & 1
\end{pmatrix} M_{\lambda_i}(x)^{-1}\right) \cdot [Y_1:Y_2]\right) \\
\gamma_{i+1}([x:1], [Y_1:Y_2]) &= \left([x:1], \,\left(M_{\lambda_{i+1}}(x) \begin{pmatrix} e^{\sqrt{-1}\pi T} & 0 \\ 0 & 1
\end{pmatrix} M_{\lambda_{i+1}}(x)^{-1}\right) \cdot [Y_1:Y_2]\right)
\end{align*}
Moreover, $\lambda_i(x) = \lambda_{i+1}(x)$ or $-\lambda_{i+1}(x)$. In the first case, $M_{\lambda_i}(x) = M_{\lambda_{i+1}}(x)$ so $\gamma_i$ and $\gamma_{i+1}$ commute on $\proj^{-1}([x:1])$. In the second case, compute for each $[x:1] \in U_i$ that 
\[
	M_{\lambda_i}(x) = M_{-\lambda_i}(x) \begin{pmatrix}
	0 & 1 \\ 1 & 0
	\end{pmatrix} \qquad \text{ and }\qquad M_{\lambda_i}(x)^{-1} = \begin{pmatrix}
	0 & 1 \\ 1 & 0
	\end{pmatrix} M_{-\lambda_i}(x)^{-1}
\]
and so
\[
	M_{-\lambda_i}(x) \begin{pmatrix} 
	e^{\sqrt{-1}\pi T} & 0 \\ 0 & 1 
	\end{pmatrix} M_{-\lambda_i}(x)^{-1} = M_{\lambda_i}(x) \begin{pmatrix} 
	1 & 0 \\ 0 & e^{\sqrt{-1}\pi T}
	\end{pmatrix} M_{\lambda_i}(x)^{-1}.
\]
It is clear that
\[
	M_{\lambda_i}(x) \begin{pmatrix}
	e^{\sqrt{-1}\pi t} & 0 \\
	0 & 1
	\end{pmatrix} M_{\lambda_i}(x)^{-1}  \qquad \text{ and } \qquad M_{\lambda_i}(x) \begin{pmatrix} 
	1 & 0 \\ 0 & e^{\sqrt{-1}\pi T}
	\end{pmatrix} M_{\lambda_i}(x)^{-1}
\]
commute in $\PGL_2(\C)$, which shows that $\gamma_i$ and $\gamma_{i+1}$ commute on $\proj^{-1}([x:1])$ in this case. 

If $[x: 1] \in U_i$ and $[x:1] \notin U_{i+1}$ then for some $t \in [0, 1]$ depending on $x$ and for all $[Y_1:Y_2] \in \CP^1$,
\begin{align*}
	\gamma_i([x:1], [Y_1:Y_2]) &= \left([x:1], \,\left(M_{\lambda_i}(x) \begin{pmatrix} e^{\sqrt{-1} \pi  t} & 0 \\ 0 & 1
	\end{pmatrix}  M_{\lambda_i}(x)^{-1}\right) \cdot [Y_1:Y_2]\right) \\
	\gamma_{i+1}([x:1], [Y_1:Y_2]) &= \dJ([x:1], [Y_1:Y_2]) = \left([x:1], \,\left(M_{\lambda_i}(x) \begin{pmatrix} -1 & 0 \\ 0 & 1
	\end{pmatrix}  M_{\lambda_i}(x)^{-1}\right) \cdot [Y_1:Y_2]\right)
\end{align*}
where the second equality follows from (the proof of) Lemma \ref{lem:diffeo-bdry}. Therefore, $\gamma_i$ and $\gamma_{i+1}$ commute on $\proj^{-1}([x:1])$. By analogous computations, $\gamma_i$ and $\gamma_{i+1}$ commute on $\proj^{-1}([x:1])$ if $x \in U_{i+1}$ and $x \notin U_i$. This proves \ref{prop:diffeo-property-2}.

Finally, note that for all $j \neq k, k+1$, the map $\gamma_k$ restricts to the identity on $e_j$ and on $\proj^{-1}(z)$ for any $z \notin B_k$ so $(\gamma_k)_*(e_j) = e_j$ and $(\gamma_k)_*(v) = v$. Moreover, $\gamma_k$ agrees with $\dJ$ on $\proj^{-1}(B_k)$, meaning that $(\gamma_k)_*(e_j) = v-e_j$ for $j = k$ and $j = k+1$. This then determines $[\gamma_k] \in \Mod(M_{n+1})$ by Lemma \ref{lem:stab-v-determined}. A computation shows that the same holds for $\RRef_{v-e_k-e_{k+1}} \circ \RRef_{e_k - e_{k+1}}$.
\end{proof}

\subsection{Step 2: Constructing $r_1, \dots, r_{n}\in \Diff^+(M_N)$}
For each $1 \leq k \leq n$, the exceptional divisor $e_k$ has a tubular neighborhood $\nu_k$ in $\proj^{-1}(V_k)$ that is diffeomorphic to $\overline{\CP^2} - \{[0:0:1]\}$. Let $i_k: \overline{\CP^2} - \{[0:0:1]\} \to \nu_k$ be this diffeomorphism and let $\tau_0$ be a diffeomorphism of $\overline{\CP^2} - \{[0:0:1]\}$ given by complex conjugation, $\tau_0: [X:Y:Z] \mapsto [\bar X: \bar Y : \bar Z]$. 

Consider a smooth path $\eta: (0, 1) \to \SO(4)$ such that
\[
	\eta(t) = \begin{cases}
	\textup{diag}(1, -1, 1, -1) & \text{ if }t \in (1, 1-\varepsilon) \\
	\Id & \text{ if }t \in (0, \varepsilon)
	\end{cases}
\]
for some $0 < \varepsilon \ll 1$. Let $B$ denote the punctured ball in $\overline{\CP^2} - \{[0:0:1]\}$ given by 
\[
	B := \{[a+b\sqrt{-1}:c+d\sqrt{-1}:1] \in \overline{\CP^2} : 0 < \lVert (a+b\sqrt{-1},c+d\sqrt{-1})\rVert < 1\} \cong (0, 1) \times \sphere^3 \subseteq \R^4, 
\]
and define $\tau \in \Diff^+(\overline{\CP^2}-\{[0:0:1]\})$ by 
\[
	\tau = \begin{cases}
	\tau_0 & \text{ on } \overline{\CP^2} - B, \\
	(t, x) \mapsto (t, \eta(t)x)  & \text{ on }B\cong (0, 1) \times \sphere^3.
	\end{cases}
\]
Then $\tau$ is compactly supported in $\overline{\CP^2} - \{[0:0:1]\}$.
\begin{defn}\label{defn:rk}
For all $1 \leq k \leq n$, let $r_k \in \Diff^+(M_{N})$ be
\[
	r_k := \begin{cases}
	i_k \circ \tau \circ i_k^{-1} &\text{ on }\nu_k \\
	\Id & \text{ on }M_{N} - \nu_k
	\end{cases}
\]
\end{defn}
\begin{rmk}\label{rmk:rk}
By construction, the diffeomorphism $r_k$ restricts to an orientation-reversing diffeomorphism of $e_k$ and preserves the homology classes $e_i$ for all $i \neq k$ and $v$. This forces $[r_k] = \RRef_{e_k} \in \Mod(M_{N})$ by Lemma \ref{lem:stab-v-determined}. Moreover, $\supp(r_k) \subseteq \nu_k \subseteq \proj^{-1}(V_k)$ and $r_k$ preserves $\nu_k \subseteq\proj^{-1}(V_k)$. 
\end{rmk}

\subsection{Step 3: Constructing $\rho_v: \Lambda_v \to \Diff^+(M_N)$}
The generators $f_1, \dots, f_{n-1}$ of $\Lambda_v$ (cf. Lemma \ref{lem:eichler}) will be mapped under $\rho_v$ to the following diffeomorphisms. 
\begin{lem}\label{lem:phi-commute}
For each $1 \leq k \leq n-1$, let
\[
	\varphi_k := r_k \circ r_{k+1} \circ \gamma_k.
\]
Then $\varphi_i \circ \varphi_j = \varphi_j \circ \varphi_i$ for any $1 \leq i, j \leq n$. 
\end{lem}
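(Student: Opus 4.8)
The plan is to reduce the commutation entirely to the two facts already in hand: that the local de Jonqui\'eres maps satisfy $\gamma_i \circ \gamma_{i+1} = \gamma_{i+1}\circ\gamma_i$ (part \ref{prop:diffeo-property-2} of Proposition \ref{prop:diffeo-properties}), and that each reflection $r_k$ is supported in $\nu_k \subseteq \proj^{-1}(V_k)$ with the $V_k$ pairwise disjoint. First I would pin down the supports. Since $\supp(\gamma_k)\subseteq\proj^{-1}(B_k)$ by construction, and since $\nu_k\subseteq\proj^{-1}(V_k)\subseteq\proj^{-1}(B_k)$ while $\nu_{k+1}\subseteq\proj^{-1}(V_{k+1})\subseteq\proj^{-1}(B_k\cap B_{k+1})\subseteq\proj^{-1}(B_k)$ (using $V_{k+1}\subseteq B_k\cap B_{k+1}$), we get the key containment
\[
	\supp(\varphi_k)\subseteq\proj^{-1}(B_k).
\]
As $B_i\cap B_j=\emptyset$ for $\lvert i-j\rvert>1$, this immediately disposes of all non-adjacent pairs: $\varphi_i$ and $\varphi_j$ then have disjoint supports and commute. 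The only real content is the adjacent case $j=i+1$.

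For adjacent indices I would localize to the overlap $R:=\proj^{-1}(B_i\cap B_{i+1})$, the sole region where $\varphi_i$ and $\varphi_{i+1}$ can both be nontrivial. Using that the $\gamma$'s preserve every fiber and that $r_k$ is supported in $\nu_k\subseteq\proj^{-1}(V_k)$, one checks that $\varphi_i$ and $\varphi_{i+1}$ each preserve the partition of $M_N$ into $R$, $\proj^{-1}(B_i-B_{i+1})$, $\proj^{-1}(B_{i+1}-B_i)$, and $\proj^{-1}(\CP^1-(B_i\cup B_{i+1}))$. On every piece except $R$ at least one of $\varphi_i,\varphi_{i+1}$ is the identity, so the two commute there. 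Here one uses that $V_i\subseteq B_{i-1}\cap B_i$ and $V_{i+2}\subseteq B_{i+1}\cap B_{i+2}$ are disjoint from $B_{i+1}$ and from $B_i$ respectively, because $B_{i-1}\cap B_{i+1}=B_i\cap B_{i+2}=\emptyset$; consequently $r_i$ and $r_{i+2}$ restrict to the identity on $R$ and play no role.

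The main work is the analysis on $R$, where I would invoke the decomposition $R=\proj^{-1}(\mathcal C_i)\sqcup\proj^{-1}(\mathcal V_i)$ already used in Proposition \ref{prop:diffeo-properties}, with $\mathcal C_i=(B_i\cap B_{i+1})\cap(U_i\cup U_{i+1})$ and $\mathcal V_i=(B_i\cap B_{i+1})-(U_i\cup U_{i+1})$. On $\proj^{-1}(\mathcal C_i)$ the support $\nu_{i+1}\subseteq\proj^{-1}(V_{i+1})\subseteq\proj^{-1}(\mathcal V_i)$ is disjoint from $\proj^{-1}(\mathcal C_i)$, so all three reflections $r_i,r_{i+1},r_{i+2}$ are the identity there; hence $\varphi_i=\gamma_i$ and $\varphi_{i+1}=\gamma_{i+1}$ on $\proj^{-1}(\mathcal C_i)$, and these commute by part \ref{prop:diffeo-property-2} of Proposition \ref{prop:diffeo-properties}. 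On $\proj^{-1}(\mathcal V_i)$ the decisive observation is $\mathcal V_i\subseteq(B_i-U_i)\cap(B_{i+1}-U_{i+1})$, so $\gamma_i=\gamma_{i+1}=\dJ$ there; with $r_i,r_{i+2}$ again vanishing, both $\varphi_i$ and $\varphi_{i+1}$ restrict to the single diffeomorphism $r_{i+1}\circ\dJ$. Two maps that agree on a set they both preserve commute on it, so $\varphi_i$ and $\varphi_{i+1}$ commute on $\proj^{-1}(\mathcal V_i)$ as well. Assembling the pieces yields $\varphi_i\circ\varphi_{i+1}=\varphi_{i+1}\circ\varphi_i$.

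I expect the only delicate point to be the bookkeeping of invariant subsets: one must verify that each $\varphi_k$ genuinely preserves $\proj^{-1}(\mathcal C_i)$ and $\proj^{-1}(\mathcal V_i)$ (so the restrictions compose as claimed), which follows from the fiber-preserving property of the $\gamma$'s together with $\nu_{i+1}\subseteq\proj^{-1}(\mathcal V_i)$ and the disjointness of $V_i,V_{i+2}$ from $B_i\cap B_{i+1}$. Once the supports are controlled, the argument needs no further computation in $\PGL_2(\C)$ beyond what Proposition \ref{prop:diffeo-properties} supplies, since on the overlap the problem collapses to either ``$\varphi_i=\gamma_i$ commuting with $\varphi_{i+1}=\gamma_{i+1}$'' or ``$\varphi_i=\varphi_{i+1}$''.
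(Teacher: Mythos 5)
Your proposal is correct and follows essentially the same route as the paper's proof: non-adjacent pairs commute by disjointness of supports, and for an adjacent pair the overlap $\proj^{-1}(B_i\cap B_{i+1})$ is decomposed into a piece where both $\varphi_i$ and $\varphi_{i+1}$ coincide with the single map $r_{i+1}\circ\dJ$ and a piece where they restrict to $\gamma_i$ and $\gamma_{i+1}$, which commute by Proposition \ref{prop:diffeo-properties}\ref{prop:diffeo-property-2}. The only (immaterial) difference is the choice of decomposition: you split along $\proj^{-1}(\mathcal C_i)$ versus $\proj^{-1}(\mathcal V_i)$, while the paper splits along $\proj^{-1}(V_{i+1})$ versus its complement $S_i$ in the overlap.
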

\begin{proof}
For any $1 \leq i, j \leq n$, the diffeomorphisms $\varphi_i$ and $\varphi_j$ commute if $\lvert i - j \rvert > 1$ because they have disjoint support. For any $1 \leq i \leq n-1$, 
\[
	\varphi_i|_{\proj^{-1}(V_{i+1})} = (r_{i+1} \circ \dJ)|_{\proj^{-1}(V_{i+1})} = \varphi_{i+1} |_{\proj^{-1}(V_{i+1})}
\]
so $\varphi_i$ and $\varphi_{i+1}$ commute on $\proj^{-1}(V_{i+1})$. Moreover on $S_i := \proj^{-1}(B_i \cap B_{i+1}) -\proj^{-1}(V_{i+1})$
\[
	\varphi_i|_{S_i} = \gamma_i, \qquad \varphi_{i+1} |_{S_i} = \gamma_{i+1}
\]
and so $\varphi_i$ and $\varphi_{i+1}$ commmute on $\proj^{-1}(B_i \cap B_{i+1})$ by Proposition \ref{prop:diffeo-properties}\ref{prop:diffeo-property-2}. Finally, $\varphi_i$ and $\varphi_{i+1}$ commute on $M_{n+1} - \proj^{-1}(B_i \cap B_{i+1})$ because $\supp(\varphi_i) \cap \supp(\varphi_{i+1})$ is contained in $\proj^{-1}(B_i \cap B_{i+1})$. 
\end{proof}
It remains to construct the image of the last generator $g$ of $\Lambda_v$ under $\rho_v$. 
\begin{lem}\label{lem:psi}
The following is a well-defined diffeomorphism:
\[
	\psi = \begin{cases}
	\varphi_1 \circ \varphi_1 & \text{ on }\proj^{-1}(V_1) \\
	\Id & \text{ on }M_{n+1} - \proj^{-1}(V_1).
	\end{cases}
\]
Moreover, 
\begin{enumerate}[(a)]
\item the map $\psi$ commutes with $\varphi_k$ for all $1 \leq k \leq n$ and \label{lem:psi-commute}
\item in $\Mod(M_{n+1})$, $[\psi] = g = \RRef_{e_1} \circ \RRef_{v-e_1}$. \label{lem:psi-mod}
\end{enumerate}
\end{lem}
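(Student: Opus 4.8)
The statement has three parts: well-definedness of $\psi$, the commuting relations (a), and the homology computation (b). The plan is to handle these in the order they are stated, since each feeds into the next.

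For well-definedness, the only possible issue is whether the two pieces of the definition agree on the overlap, i.e. near $\proj^{-1}(\partial V_1)$. The key observation is that $\varphi_1 = r_1 \circ r_2 \circ \gamma_1$, and by Definition \ref{defn:rk} and Remark \ref{rmk:rk} the map $r_2$ is supported in $\nu_2 \subseteq \proj^{-1}(V_2)$, which is disjoint from $\proj^{-1}(V_1)$. Thus on $\proj^{-1}(V_1)$ we have $\varphi_1 = r_1 \circ \gamma_1$, and since $\supp(r_1) \subseteq \nu_1 \subseteq \proj^{-1}(V_1)$ while $\gamma_1$ is supported in $\proj^{-1}(B_1)$ and equals $\dJ$ on $\proj^{-1}(B_1 - U_1) \supseteq \proj^{-1}(V_1)$ away from the collar, I would verify that $\varphi_1$ restricted to a neighborhood of $\partial V_1$ is the identity (or at least agrees there with $\Id$), so that $\varphi_1 \circ \varphi_1$ glues smoothly to $\Id$ on the complement of $\proj^{-1}(V_1)$. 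Concretely, $V_1$ should be chosen inside $B_1 - U_1$ where $\gamma_1 = \dJ$, and $\dJ$ fixes a neighborhood of $e_1$ only up to the reflecting behavior, so the square $\varphi_1^2$ returns to the identity near the boundary; this is where I expect the cleanest argument comes from checking that $r_1^2$ and the local model for $\dJ^2$ are each isotopic to the identity rel boundary on $\proj^{-1}(V_1)$.

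For part (a), the commuting claim, I would reduce to the already-established commutativity from Lemma \ref{lem:phi-commute}. Since $\supp(\psi) \subseteq \proj^{-1}(V_1)$, the map $\psi$ automatically commutes with any $\varphi_k$ for $k \geq 3$ by disjoint support. For $k = 1$, $\psi = \varphi_1^2$ on $\proj^{-1}(V_1)$ and $\Id$ elsewhere, so $\psi$ commutes with $\varphi_1$ provided $\varphi_1$ preserves $\proj^{-1}(V_1)$; this follows because $\varphi_1$ preserves $\proj$ (Proposition \ref{prop:diffeo-properties}\ref{prop:diffeo-property-1}) and restricts to the identity on a neighborhood of $\partial V_1$ as established above. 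The only genuinely interactive case is $k = 2$: here I would use that on $\proj^{-1}(V_1)$ the map $\varphi_2$ acts only through $r_1 \circ \dJ$ (mirroring the identity $\varphi_i|_{\proj^{-1}(V_{i+1})} = \varphi_{i+1}|_{\proj^{-1}(V_{i+1})}$ used in Lemma \ref{lem:phi-commute}), and appeal to Lemma \ref{lem:phi-commute} itself since $\varphi_1$ and $\varphi_2$ already commute, so their restrictions do too and the square is then preserved.

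For part (b), the homology computation, I would pass to mapping classes and use the established values $[\varphi_1] = [r_1][r_2][\gamma_1] = \RRef_{e_1} \circ \RRef_{e_2} \circ \RRef_{v-e_1-e_2} \circ \RRef_{e_1-e_2}$ from Remark \ref{rmk:rk} and Proposition \ref{prop:diffeo-properties}\ref{prop:diffeo-property-3}; by Lemma \ref{lem:eichler} this equals the Eichler transformation $E(v, e_1 + e_2) = f_1$. Since $\psi$ is supported in $\proj^{-1}(V_1)$ and agrees there with $\varphi_1^2 = r_1^2 \circ(\text{local }\dJ)^2$, its action on homology is determined by its action on $e_1$ (all other basis classes being fixed, as $\supp(\psi)$ misses them), via Lemma \ref{lem:stab-v-determined}. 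I would compute $[\psi](e_1)$ directly from the local model — tracking how $\varphi_1^2$ moves the class $e_1$ — and check it matches $(\RRef_{e_1} \circ \RRef_{v-e_1})(e_1) = E(v, 2e_1)(e_1)$; Lemma \ref{lem:stab-v-determined} then upgrades the single-class agreement to equality of mapping classes. \emph{The main obstacle} I anticipate is the well-definedness/boundary-matching step, since it requires a careful local analysis of $\varphi_1^2$ near $\partial V_1$ to confirm the gluing is smooth and the identity; once that local behavior is pinned down, parts (a) and (b) follow largely formally from the results already proved.
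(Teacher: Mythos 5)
Your plan for parts (a) and (b) follows essentially the paper's route (disjointness of supports plus the computation $[\psi](e_1)=[\varphi_1^2](e_1)=e_1+2v$ fed into Lemma \ref{lem:stab-v-determined}), and your reduction $\varphi_1|_{\proj^{-1}(V_1)} = r_1 \circ \gamma_1 = r_1 \circ \dJ$ (since $\supp(r_2) \subseteq \proj^{-1}(V_2)$) is exactly the right first move. However, the well-definedness step --- which you correctly identify as the crux --- is where your proposed argument breaks down. First, your claim that $\varphi_1$ ``restricted to a neighborhood of $\partial V_1$ is the identity'' is false: on such a neighborhood $r_1 \equiv \Id$, so $\varphi_1 = \dJ$ there, and $\dJ$ is a nontrivial involution of each fiber $\proj^{-1}(z)$; only the square $\varphi_1^2$ is the identity near the boundary. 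Second, and more seriously, your proposed verification --- checking that $r_1^2$ and the local model for $\dJ^2$ are each \emph{isotopic to the identity rel boundary} --- proves the wrong thing: for the piecewise formula defining $\psi$ to yield a smooth map, one needs $\varphi_1^2$ to \emph{equal} $\Id$ on a neighborhood of $\partial \proj^{-1}(V_1)$ (agreement of germs), and isotopy rel boundary neither implies nor is implied by that. (Also note $\varphi_1^2 = r_1 \circ \dJ \circ r_1 \circ \dJ$, which is not $r_1^2 \circ \dJ^2$, since $r_1$ and $\dJ$ do not commute.) The paper's argument is exact and short: $\supp(r_1) \subseteq \nu_1$ lies in the interior of $\proj^{-1}(V_1)$, so $r_1|_C = \Id$ on a collar $C$ of $\partial\proj^{-1}(V_1)$; since $\proj \circ \dJ = \proj$, the involution $\dJ$ preserves $C$; hence $\psi|_C = (r_1 \circ \dJ \circ r_1 \circ \dJ)|_C = (\dJ \circ \dJ)|_C = \Id|_C$, a literal equality of maps, which is exactly what the gluing requires. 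The identity $\dJ^2 = \Id$ used germwise along the collar is the missing idea in your plan.

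Two smaller corrections to part (a). Your $k=2$ case is not ``genuinely interactive'': $\varphi_2 = r_2 \circ r_3 \circ \gamma_2$ involves no $r_1$ (your description of it acting ``through $r_1 \circ \dJ$'' on $\proj^{-1}(V_1)$ is incorrect), and $\supp(\varphi_2) \subseteq \proj^{-1}(B_2)$ because $V_2, V_3 \subseteq B_2$; since $V_1 \cap B_2 = \emptyset$, the paper disposes of all $k>1$ at once by disjointness of supports. For $k=1$, your citation of Proposition \ref{prop:diffeo-properties}\ref{prop:diffeo-property-1} does not apply: that statement concerns $\gamma_1$, not $\varphi_1$, and $\varphi_1$ does \emph{not} preserve $\proj$ (the maps $r_1, r_2$ are not fiber-preserving). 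What is true, and suffices, is that $r_1$, $r_2$, and $\gamma_1$ each preserve the subset $\proj^{-1}(V_1)$, hence so does $\varphi_1$; one then checks commutation separately on $\proj^{-1}(V_1)$ and its complement, as the paper does.
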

\begin{proof}
By definition, $r_k$ has support contained in the interior of $\proj^{-1}(V_k)$ for all $1 \leq k \leq n$. So $r_1|_C = \Id|_C$ on some collar neighborhood $C$ of $\proj^{-1}(V_1)$, and
\[
	\psi|_C = (\varphi_1 \circ \varphi_1)|_{C} = (r_1 \circ \dJ \circ r_1 \circ \dJ))|_C = (\dJ \circ \dJ)|_C = \Id|_C.
\]
Moreover, $\dJ$, $r_1$, and $r_2$ all preserve $\proj^{-1}(V_1)$, so the map $\psi$ is indeed a diffeomorphism.

The diffeomorphisms $\psi$ and $\varphi_k$ have disjoint supports for all $k > 1$. Considering the subsets $\proj^{-1}(V_1)$ and $M_{n+1} - \proj^{-1}(V_1)$ separately shows that $\varphi_1$ and $\psi$ commute as well.

Compute for all $2 \leq k \leq n$ that $[\psi](e_k) = e_k$ because $\supp(\psi) \subseteq \proj^{-1}(V_1)$. Moreover, $\psi$ agrees with $\varphi_1^2$ on $\proj^{-1}(V_1)$, meaning that 
\[
	[\psi](e_1) = [\varphi_1^2](e_1) = e_1+2v.
\]
Computing that 
\[
	\RRef_{e_1} \circ \RRef_{v-e_1}(e_k) = [\psi](e_k)
\] 
for all $1 \leq k \leq n$ and applying Lemma \ref{lem:stab-v-determined} shows that $[\psi] = g$.
\end{proof}

\begin{prop}\label{prop:rho-v}
There is a homomorphism $\rho_v: \Lambda_v \to \Diff^+(M_{N})$ defined by
\[
	\rho_v(f_k) := \varphi_k \quad\text{ for all }1 \leq k \leq n-1, \qquad \rho_v(g) := \psi
\]
where $g$ and $f_k$ for $1 \leq k \leq n-1$ are the generators of $\Lambda_v$ as given in Lemma \ref{lem:eichler}. Moreover, 
\begin{enumerate}[(a)]
\item $\rho_v$ is a section of the map $q: \Diff^+(M_N) \to \Mod(M_N)$ restricted to $\Lambda_v \leq \Mod(M_N)$, and \label{prop:rho-v-1} 
\item for all $\varphi \in \rho_v(\Lambda_v)$,
\[
	(\proj\circ\varphi)|_{M_N - \bigcup_{i=1}^n\proj^{-1}(V_i)} = \proj|_{M_N - \bigcup_{i=1}^n\proj^{-1}(V_i)}.
\]
Hence $\rho_v(\Lambda_v)$ almost preserves the Lefschetz fibration $\proj: M_N \to \CP^1$. \label{prop:rho-v-2} 
\end{enumerate}
\end{prop}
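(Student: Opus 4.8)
The plan is to leverage that $\Lambda_v \cong \Z^{N-1}$ is free abelian (Lemma \ref{lem:ses}): a homomorphism out of a free abelian group both exists and is uniquely determined by any assignment of pairwise commuting images to a basis. So the first task is to confirm that the $n = N-1$ elements $f_1, \dots, f_{n-1}, g$ supplied by Lemma \ref{lem:eichler} actually form a \emph{basis} of $\Lambda_v$, not merely a generating set. They generate by that lemma, and since their number equals the rank $N-1$ of the free abelian group $\Lambda_v$, the resulting surjection $\Z^{N-1} \to \Lambda_v$ of free modules of equal finite rank is forced to be an isomorphism; hence they are a basis and there are no relations to impose beyond commutativity. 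I would then invoke Lemma \ref{lem:phi-commute} and Lemma \ref{lem:psi}\ref{lem:psi-commute}, which say precisely that $\varphi_1, \dots, \varphi_{n-1}, \psi$ pairwise commute, to conclude that $\rho_v$ is a well-defined homomorphism.

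For part \ref{prop:rho-v-1} I would observe that $q \circ \rho_v$ is a homomorphism $\Lambda_v \to \Mod(M_N)$, so it agrees with the inclusion as soon as the two agree on the generators. This reduces to the mapping-class computations already in hand: combining $[r_k] = \RRef_{e_k}$ (Remark \ref{rmk:rk}) with $[\gamma_k] = \RRef_{v-e_k-e_{k+1}} \circ \RRef_{e_k-e_{k+1}}$ (Proposition \ref{prop:diffeo-properties}\ref{prop:diffeo-property-3}) gives
\[
[\varphi_k] = \RRef_{e_k} \circ \RRef_{e_{k+1}} \circ \RRef_{v-e_k-e_{k+1}} \circ \RRef_{e_k-e_{k+1}} = f_k,
\]
and $[\psi] = g$ is exactly Lemma \ref{lem:psi}\ref{lem:psi-mod}. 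Thus $q \circ \rho_v = \mathrm{id}_{\Lambda_v}$, which is the assertion that $\rho_v$ is a section.

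For part \ref{prop:rho-v-2} the idea is that every factor which disturbs the fibration is supported inside $\bigcup_i \proj^{-1}(V_i)$. Writing an arbitrary element of $\rho_v(\Lambda_v)$ as $\varphi = \varphi_1^{a_1}\cdots\varphi_{n-1}^{a_{n-1}}\psi^{b}$ (legitimate since the factors commute), I would use $\supp(r_k) \subseteq \proj^{-1}(V_k)$ (Remark \ref{rmk:rk}) and $\supp(\psi) \subseteq \proj^{-1}(V_1)$ (Lemma \ref{lem:psi}) to see that, on $M_N - \bigcup_i \proj^{-1}(V_i)$, each $\varphi_k$ restricts to $\gamma_k$ and $\psi$ restricts to the identity. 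Since $\proj \circ \gamma_k = \proj$ (Proposition \ref{prop:diffeo-properties}\ref{prop:diffeo-property-1}), $\gamma_k$ fixes every fiber setwise and therefore carries the saturated open set $M_N - \bigcup_i \proj^{-1}(V_i)$ to itself; a product of such maps does too, and $\proj \circ \varphi = \proj$ there. Taking the homomorphism $i$ of Definition \ref{defn:almost-preserving} to be trivial then exhibits $\rho_v(\Lambda_v)$ as almost preserving $\proj$.

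I expect the genuine care to lie in part \ref{prop:rho-v-2}, though the difficulty is bookkeeping rather than conceptual: before claiming that the restriction of a product equals the product of the restrictions, I must verify that each factor maps the open region $M_N - \bigcup_i \proj^{-1}(V_i)$ into itself — which is exactly why it matters that $\gamma_k$ preserves each fiber and that the remaining factors $r_k,\psi$ act as the identity off the $\proj^{-1}(V_i)$. A smaller point I would flag at the outset is the equality of the generator count with the rank of $\Lambda_v$, since this is what guarantees Step 1 needs no relation-checking beyond commutativity.
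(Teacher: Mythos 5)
Your proposal is correct and follows essentially the same route as the paper's proof: well-definedness via the free-abelian structure of $\Lambda_v$ and the commutativity lemmas (Lemma \ref{lem:phi-commute}, Lemma \ref{lem:psi}\ref{lem:psi-commute}), the section property by computing $[\varphi_k] = f_k$ and $[\psi] = g$ on generators, and part \ref{prop:rho-v-2} from $\supp(r_k) \subseteq \proj^{-1}(V_k)$, $\supp(\psi) \subseteq \proj^{-1}(V_1)$, and $\proj \circ \gamma_k = \proj$. The two points you flag — that the generators form a basis so only commutativity relations need checking, and that each factor preserves the saturated set $M_N - \bigcup_i \proj^{-1}(V_i)$ before restrictions can be composed — are left implicit in the paper, and your handling of both is correct.
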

\begin{proof}
By Lemma \ref{lem:eichler}, $\Lambda_v \cong \Z^n$ is generated by $f_1, \dots, f_{n-1}, g$. By Lemmas \ref{lem:phi-commute} and \ref{lem:psi}\ref{lem:psi-commute}, the image of $\rho_v$ is abelian and therefore $\rho_v$ is a well-defined homomorphism. 

Compute using Proposition \ref{prop:diffeo-properties}\ref{prop:diffeo-property-3} and Remark \ref{rmk:rk} that
\[
	[\rho_v(f_k)] = [r_k] \circ [r_{k+1}] \circ [\gamma_k] = \RRef_{e_k} \circ \RRef_{e_{k+1}} \circ \RRef_{v-e_k-e_{k+1}} \circ \RRef_{e_k-e_{k+1}} = f_k \in \Lambda_v.
\]
Lemma \ref{lem:psi} shows that $[\rho_v(g)] = g$. Therefore, $\rho_v$ is a section of the quotient map $q: \Diff^+(M_N) \to \Mod(M_N)$ restricted to $\Lambda_v \leq \Mod(M_N)$.

Finally, $\supp(r_k) \subseteq \proj^{-1}(V_k)$ for all $1 \leq k \leq n$ (cf. Remark \ref{rmk:rk}). By Proposition \ref{prop:diffeo-properties}\ref{prop:diffeo-property-1}, $\proj \circ\gamma_k = \proj$ for all $1 \leq k \leq n-1$, so
\[
	\varphi_k|_{M_{n+1} - \bigcup_{i=1}^n \proj^{-1}(V_i)} = \gamma_k|_{M_{n+1} - \bigcup_{i=1}^n \proj^{-1}(V_i)}.
\]
By Lemma \ref{lem:psi},
\[
	\psi|_{M_{n+1} - \bigcup_{i=1}^n \proj^{-1}(V_i)} = \Id|_{M_{n+1} - \bigcup_{i=1}^n \proj^{-1}(V_i)}. 
\]
Hence $\proj \circ \varphi = \proj$ restricted to $M_N - \bigcup_{i=1}^n \proj^{-1}(V_i)$ for all $\varphi \in \rho_v(\Lambda_v)$ and so $\rho_v(\Lambda_v)$ almost preserves $\proj$. 
\end{proof}

\subsection{Step 4: Extension to any primitive, isotropic class $w$ of minimal genus $0$}
With the constructions above in hand, we conclude the proof of Theorem \ref{thm:parabolics}.
\begin{proof}[Proof of Theorem \ref{thm:parabolics}]
Because $w$ is a primitive, isotropic class of minimal genus $0$, there exists some $\alpha \in \Diff^+(M_N)$ such that $[\alpha](w) = v$ by a theorem of Li--Li (\cite[Theorem 4.2]{li--li}). Using the definitions of $h_v$ and $h_w$ (cf. Lemma \ref{lem:ses}), compute that
\[
	\Lambda_v   = [\alpha]\circ \Lambda_w\circ [\alpha^{-1}]
\] 
and define $\rho_w: \Lambda_w \to \Diff^+(M_N)$ by 
\[
	\rho_w(f) = \alpha^{-1} \circ \rho_v([\alpha] \circ f \circ [\alpha^{-1}]) \circ \alpha
\]
where $\rho_v: \Lambda_v \to \Diff^+(M_N)$ is the homomorphism constructed in Proposition \ref{prop:rho-v}. Compute that for all $x \in M_N - \bigcup_{k=1}^n (\proj \circ \alpha)^{-1}(V_k)$,
\[
	(\proj\circ\alpha)\circ \rho_w(f)(x) = \proj \circ \rho_v([\alpha] \circ f \circ [\alpha^{-1}]) \circ \alpha(x) = (\proj \circ \alpha)(x)
\]
because $\proj\circ \rho_v([\alpha] \circ f \circ [\alpha^{-1}]) = \proj$ on $M_N - \bigcup_{k=1}^n \proj^{-1}(V_k)$ by Proposition \ref{prop:rho-v}\ref{prop:rho-v-2}. Hence $\rho_w(\Lambda_w)$ almost preserves $\proj\circ \alpha$, which is holomorphic for some complex structure on $M_N$. Finally, compute by Proposition \ref{prop:rho-v}\ref{prop:rho-v-1} that for any $f \in \Lambda_w$,
\[
	q(\alpha^{-1} \circ \rho_v([\alpha] \circ f \circ [\alpha^{-1}]) \circ \alpha) = [\alpha^{-1}] \circ (q \circ \rho_v)([\alpha] \circ f \circ [\alpha^{-1}]) \circ [\alpha] = f.\qedhere
\]
\end{proof}

If $2 \leq N \leq 8$, Theorem \ref{thm:parabolics} holds for any primitive, isotropic class in $H_2(M_N; \Z)$. 
\begin{cor}\label{cor:parabolics}
Let $2 \leq N \leq 8$ and let $w \in H_2(M_N; \Z)$ be any primitive, isotropic class. There exists a homomorphism $\rho_w: \Lambda_w \to \Diff^+(M_{n+1})$ such that the following diagram commutes: 
\begin{figure}[H]
\centering
\begin{tikzcd}
& \Diff^+(M_{N}) \arrow[d, "q"] \\
\Lambda_w \arrow[ru, "\rho_w", dashed] \arrow[r, hook] & \Mod(M_{N})                  
\end{tikzcd}
\end{figure}
Moreover, the image $\rho_w(\Lambda_w)$ almost preserves a holomorphic genus-$0$ Lefschetz fibration $p: M_N \to \CP^1$ whose generic fiber represents the homology class $w$.
\end{cor}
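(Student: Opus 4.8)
The plan is to reduce Corollary \ref{cor:parabolics} to the already-proved Theorem \ref{thm:parabolics} by invoking the minimal-genus hypothesis for free in the restricted range $2 \leq N \leq 8$. The only gap between the two statements is that Theorem \ref{thm:parabolics} requires $w$ to have minimal genus $0$, whereas the corollary drops this requirement and asks only that $w$ be primitive and isotropic. So the entire content of the corollary is to observe that, in this range of $N$, every such $w$ automatically satisfies the stronger hypothesis.

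First I would appeal to Lemma \ref{lem:isotropic}\ref{lem:minimal-genus}, which states precisely that for $2 \leq N \leq 8$ any isotropic class $w \in H_2(M_N; \Z)$ has minimal genus $0$. (The lemma is stated for isotropic classes without assuming primitivity, but here we are given $w$ primitive as well, so the hypothesis of Theorem \ref{thm:parabolics} is met on the nose.) This is the one substantive input, and it relies on the arithmetic fact, used in the proof of Lemma \ref{lem:isotropic}, that indefinite unimodular lattices of rank $\leq 9$ are diagonalizable (via \cite{mordell}) together with Wall's realization of lattice automorphisms by diffeomorphisms \cite{wall-diffeos}.

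Having verified that $w$ has minimal genus $0$, I would then simply apply Theorem \ref{thm:parabolics} verbatim to $w$. That theorem produces a homomorphism $\rho_w : \Lambda_w \to \Diff^+(M_N)$ which is a section of $q$ over $\Lambda_w$ and whose image almost preserves a holomorphic genus-$0$ Lefschetz fibration $p : M_N \to \CP^1$ with generic fiber representing $w$. Every conclusion demanded by the corollary — the commuting triangle, the almost-preservation, and the fiber class — is exactly a conclusion of Theorem \ref{thm:parabolics}, so no further construction is needed.

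I do not anticipate a genuine obstacle: the proof is a one-line deduction, with all the real work already carried out in Lemma \ref{lem:isotropic} and Theorem \ref{thm:parabolics}. The only point requiring mild care is bookkeeping the distinction between the general Theorem \ref{thm:parabolics} (valid for all $N \geq 2$ under a minimal-genus assumption) and this specialization (valid for $2 \leq N \leq 8$ with the assumption removed), making clear that Lemma \ref{lem:isotropic}\ref{lem:minimal-genus} is what licenses dropping the hypothesis. I would phrase the proof as: by Lemma \ref{lem:isotropic}\ref{lem:minimal-genus}, $w$ has minimal genus $0$; hence Theorem \ref{thm:parabolics} applies and yields the desired $\rho_w$.
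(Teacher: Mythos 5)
Your proposal is correct and matches the paper's proof exactly: cite Lemma \ref{lem:isotropic}\ref{lem:minimal-genus} to conclude that any primitive, isotropic class $w$ has minimal genus $0$ when $2 \leq N \leq 8$, then apply Theorem \ref{thm:parabolics} verbatim. Nothing further is needed.
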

\begin{proof}
If $2 \leq N \leq 8$ and $w \in H_2(M_N; \Z)$ is a primitive, isotropic class then Lemma \ref{lem:isotropic}\ref{lem:minimal-genus} says that the minimal genus of $w$ is $0$. Now apply Theorem \ref{thm:parabolics}.
\end{proof}

\section{Theorem \ref{thm:individual-stab-v}: individual parabolic elements in $\Mod(M_{N})$}\label{sec:individual-stab-v}

In this section we prove Theorem \ref{thm:individual-stab-v} using the diffeomorphisms constructed in Section \ref{sec:thm-parabolics}. The following lemma considers the subgroup $S_n \leq \OO(n)(\Z) \leq \Stab(v)$ given by permuting the classes $e_1, \dots, e_n$. 
\begin{lem}\label{lem:transpositions}
For each $1 \leq k \leq n-1$, there exist $s_k \in \Diff^+(M_{N})$ and $\tau_k \in \Diff^+(\CP^1)$ such that 
\begin{enumerate}[(a)]
\item $\proj \circ s_k = \tau_k \circ \proj$, and \label{lem:trans-1}
\item $[s_k] = \RRef_{e_k-e_{k+1}} \in \Mod(M_{N}). $\label{lem:trans-2}
\end{enumerate}
\end{lem}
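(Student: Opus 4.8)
The plan is to realize the transposition $\RRef_{e_k-e_{k+1}}$ (which swaps $e_k\leftrightarrow e_{k+1}$ and fixes $v$ and every $e_j$ with $j\neq k,k+1$) by \emph{braiding} the two singular fibers of $\proj$ lying over $z_k$ and $z_{k+1}$. First I would fix a half-twist $\tau_k\in\Diff^+(\CP^1)$ supported in $B_k$ that interchanges $z_k\leftrightarrow z_{k+1}$ and restricts to the identity near $\partial B_k$; such a $\tau_k$ exists as the time-$1$ map of an isotopy rotating an inner subdisk of $B_k$ by angle $\pi$. Since $\supp(\tau_k)\subseteq B_k$ and $B_k$ contains no $z_j$ other than $z_k,z_{k+1}$, the map $\tau_k$ fixes every other $z_j$, which is exactly what is needed for part (a) to hold away from $B_k$.

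The heart of the construction is to lift $\tau_k$ to $M_N$. I would identify $\proj^{-1}(B_k)$ with the blowup $\Bl_{\{p_k,p_{k+1}\}}(B_k\times\CP^1)$ of the trivial $\CP^1$-bundle at the two points $p_k=(z_k,c_k)$ and $p_{k+1}=(z_{k+1},c_{k+1})$ over which the singular fibers sit, where $c_k,c_{k+1}\in\{[1:0],[0:1]\}$ record the constant section containing each blown-up point. Then I would build a bundle automorphism of $B_k\times\CP^1$ covering $\tau_k$, of twisted-product form
\[
	\Psi(z,y)=\bigl(\tau_k(z),\,R_z(y)\bigr),
\]
where $z\mapsto R_z$ is a smooth map $B_k\to\Diff^+(\CP^1)$ equal to the identity for $z$ near $\partial B_k$ and satisfying $R_{z_k}(c_k)=c_{k+1}$ and $R_{z_{k+1}}(c_{k+1})=c_k$, so that $\Psi$ interchanges $p_k\leftrightarrow p_{k+1}$. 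As $\Psi$ permutes the two centers, it lifts canonically to a diffeomorphism of $\Bl_{\{p_k,p_{k+1}\}}(B_k\times\CP^1)\cong\proj^{-1}(B_k)$ interchanging the two exceptional divisors; extending by the identity over $\CP^1-B_k$ defines $s_k\in\Diff^+(M_N)$. By construction the lift still covers $\tau_k$, and both $\proj\circ s_k$ and $\tau_k\circ\proj$ equal $\proj$ outside $B_k$, giving (a).

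For (b) I would read off the homology action: since $s_k$ is supported in $\proj^{-1}(B_k)$ it fixes $v$ and every $e_j$ with $j\neq k,k+1$, while it interchanges the exceptional divisor over $p_k$ (class $e_k$) with that over $p_{k+1}$ (class $e_{k+1}$). Thus $[s_k]$ lies in $\Stab(v)$ and agrees with $\RRef_{e_k-e_{k+1}}$ on $e_1,\dots,e_n$, so Lemma \ref{lem:stab-v-determined} forces $[s_k]=\RRef_{e_k-e_{k+1}}\in\Mod(M_N)$.

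The main obstacle is constructing the family $R_z$ in the case $c_k\neq c_{k+1}$ (the transition $k=m$ between the two sections of the de Jonqui\'eres model), where $\tau_k\times\Id$ fails to interchange $p_k$ and $p_{k+1}$ and one must genuinely move the fiber coordinate. Here I would use that $\Diff^+(\CP^1)$ deformation retracts onto the connected group $\SO(3)$, so the pole-swapping rotation sending $c_k\mapsto c_{k+1}$ lies in the identity component; the prescribed values of $R_z$ at $z_k$, at $z_{k+1}$, and near $\partial B_k$ are therefore mutually isotopic and can be interpolated to a smooth map $B_k\to\Diff^+(\CP^1)$. (When $c_k=c_{k+1}$ one may simply take $R_z\equiv\Id$, i.e.\ $\Psi=\tau_k\times\Id$.) A secondary point requiring care is that $\Psi$, permuting the centers, lifts to the blowup interchanging the exceptional divisors and preserving orientation, which follows from functoriality of the oriented blowup under orientation-preserving diffeomorphisms.
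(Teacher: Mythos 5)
Your proposal follows the same basic strategy as the paper's proof: build a compactly supported half-twist $\tau_k$ of $\CP^1$, supported in $B_k$ and swapping $z_k \leftrightarrow z_{k+1}$, lift it through the blowup description of $M_N$ to get $s_k$ covering $\tau_k$, and then pin down $[s_k]$ via the homology action and Lemma \ref{lem:stab-v-determined} --- all three steps appear verbatim in the paper, where $\tau_k$ is taken to be $\iota_k \circ A \circ \iota_k^{-1}$ for the M\"obius involution $A = \diag(-1,1) \in \PGL_2(\C)$ and an embedding $\iota_k$ that is holomorphic near the two marked points. Where you diverge is in the fiber direction: the paper lifts the plain product map $s:(X,Y) \mapsto (\tau_k(X),Y)$, whereas you allow a twisted product $\Psi(z,y) = (\tau_k(z), R_z(y))$. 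This extra care is not wasted: in the de Jonqui\'eres model the blown-up points over $z_1,\dots,z_m$ lie on the section $[1:0]$ and those over $z_{m+1},\dots,z_n$ on $[0:1]$, so for $k=m$ the map $\tau_k \times \Id$ sends $([a_m:1],[1:0])$ to $([a_{m+1}:1],[1:0])$, which is \emph{not} a blown-up point, and hence does not literally lift; your family $R_z$ (or some equivalent rotation device, as in the construction of $j_k$ in Section \ref{sec:parabolics}) is genuinely needed in that case, a point the paper's own proof passes over silently.

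The one real gap is your justification for lifting $\Psi$ to the blowup. There is no ``functoriality of the oriented blowup under orientation-preserving diffeomorphisms'' in the smooth category: connect-summing with $\overline{\CP^2}$ at a point depends on a choice of complex-linear chart, and a smooth diffeomorphism permuting the centers lifts canonically only when it is holomorphic (or at least has complex-linear derivative) near them. In general one must first isotope the map near the centers to achieve this, and such an isotopy, supported in small balls around $p_k, p_{k+1}$, is not obviously compatible with covering $\tau_k$ --- it threatens the exact equality $\proj \circ s_k = \tau_k \circ \proj$ demanded in part \ref{lem:trans-1} (you would only retain it outside small disks around $z_k, z_{k+1}$). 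The repair is exactly the paper's device, and fits your construction with no structural change: require $\tau_k$ to be holomorphic on neighborhoods of $z_k$ and $z_{k+1}$, and take $R_z$ to be \emph{constant}, equal to a fixed pole-swapping rotation in $\PGL_2(\C)$, on those neighborhoods (interpolating to $\Id$ only over the annular region, where there are no blown-up points). Then $\Psi$ is holomorphic near $p_k$ and $p_{k+1}$, the complex blowup lift exists canonically and is holomorphic near the exceptional divisors, it still covers $\tau_k$ everywhere, and it carries the exceptional sphere over $p_k$ biholomorphically to the one over $p_{k+1}$ --- which also settles the orientation issue, i.e.\ that $(s_k)_*(e_k) = e_{k+1}$ rather than $-e_{k+1}$, needed for part \ref{lem:trans-2}. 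With that adjustment your argument is complete.
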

\begin{proof}
Let
\[
	A := \begin{pmatrix}
	-1 & 0 \\ 0 & 1
	\end{pmatrix} \in \PGL_2(\C)
\]
so that $A$ has order $2$ and $A([1:1]) = [-1:1] \in \CP^1$. There exists a neighborhood $D$ diffeomorphic to a disk $\mathbb D^2$ of the path $\{[t:1] : t \in [-1, 1]\} \subseteq \CP^1$ that is preserved by $A$. Let $\iota_k: D \hookrightarrow \CP^1$ be a smooth embedding with image contained in $B_k - U_k$ and
\[
	\iota_k([-1:1]) = [a_k:1], \quad \iota_k([1:1]) = [a_{k+1}:1], 
\]
so that $\iota_k$ is holomorphic if restricted to small neighborhoods of $[1:1]$ and $[-1:1]$ in $D$. Now let $\tau_k \in \Diff^+(\CP^1)$ be a diffeomorphism such that
\[
	\tau_k = \begin{cases}
	\iota_k \circ A \circ \iota_k^{-1} & \text{ on }\iota_k(D)\subseteq B_k - U_k, \\
	\Id & \text{ on }\CP^1 - B_k.
	\end{cases}
\]
Consider the diffeomorphism $s:(X, Y) \mapsto (\tau_k(X), Y)$ of $\CP^1 \times \CP^1$ which extends to a diffeomorphism $s_k$ of $M_N$ because $s$ is holomorphic on a neighborhood of $\proj^{-1}([a_i:1])$ for all $1 \leq i\leq n$. By construction, $\proj \circ s_k = \tau_k \circ \proj$. Moreover, if $i \neq k$ or $k+1$ then $s_k$ acts as the identity on $e_i$ but $s_k(e_k) = e_{k+1}$ and $s_k(e_{k+1}) = e_k$. Hence $[s_k] = \RRef_{e_k - e_{k+1}}$ by Lemma \ref{lem:stab-v-determined} because $[s_k]\in \Stab(v)$. 
\end{proof}

We may assume that for all $1 \leq k \leq n-1$, the choice of $V_{k+1} \subseteq \CP^1$ satisfies $\tau_k(V_k) = V_{k+1}$. This also implies that $\tau_k(V_{k+1}) = V_k$ because $\tau_k|_{B_k-U_k}$ has order $2$. 
\begin{proof}[Proof of Theorem \ref{thm:individual-stab-v}]
The theorem holds for $N = 1$ because then $\Stab(w) = 1$. Now assume that $N \geq 2$ and that $w = v$. Since $h \in \Stab(v)$, we may write $h = f \circ \sigma$ where $f \in \Lambda_v$ and $\sigma \in \Aut(\Z\{e_1, \dots, e_n\}, Q_{M_N}) \cong \OO(n)(\Z)$ by Lemma \ref{lem:ses}. Furthermore, $\sigma$ can be written as a product $[r] \circ [s] \in \Aut(\Z\{e_1, \dots, e_n\}, Q_{M_N})$ where
\[
	r \in \langle r_k : 1 \leq k \leq n \rangle \leq \Diff^+(M_N), \qquad s \in \langle s_k : 1 \leq k \leq n-1 \rangle \leq \Diff^+(M_N),
\]
by Remark \ref{rmk:rk} and Lemma \ref{lem:transpositions}\ref{lem:trans-2}. Let
\[
	\varphi := \rho_v(f) \circ r \circ s \in \Diff^+(M_N)
\]
where $\rho_v: \Lambda_v \to \Diff^+(M_N)$ is the homomorphism from Proposition \ref{prop:rho-v}. By construction, $[\varphi] = h$.

Note that 
\[
	\proj\circ\rho_v(f)\circ r|_{M_N-\bigcup_{i=1}^n \proj^{-1}(V_i)} = \proj|_{M_N-\bigcup_{i=1}^n \proj^{-1}(V_i)}
\]
by Proposition \ref{prop:rho-v}\ref{prop:rho-v-2} and by Remark \ref{rmk:rk}. By Lemma \ref{lem:transpositions}\ref{lem:trans-1}, there exists $\tau \in \Diff^+(\CP^1)$ such that $\proj\circ s = \tau \circ \proj$ and $\tau$ preserves $\bigcup_{i=1}^n \proj^{-1}(V_i)$. Hence 
\[
	\proj\circ \varphi|_{M_N-\bigcup_{i=1}^n \proj^{-1}(V_i)}= \proj\circ (\rho_v(f) \circ r \circ s)|_{M_N-\bigcup_{i=1}^n \proj^{-1}(V_i)} = \tau \circ \proj |_{M_N-\bigcup_{i=1}^n \proj^{-1}(V_i)},
\]
which shows that $\varphi$ almost preserves $\proj$. 

Take any other primitive, isotropic class $w \in H_2(M_N; \Z)$ with minimal genus $0$; we proceed similarly as in the proof of Theorem \ref{thm:parabolics}. Apply Li--Li \cite[Theorem 4.2]{li--li} to obtain $\alpha \in \Diff^+(M_N)$ such that $[\alpha](w)= v$ and $[\alpha] \circ g \circ [\alpha^{-1}] \in \Stab(v)$. There exists a diffeomorphism $\varphi \in \Diff^+(M_N)$ almost preserving $\proj: M_N \to \CP^1$ with $[\varphi] = [\alpha] \circ g \circ [\alpha^{-1}]$. Then $\alpha^{-1} \circ \varphi \circ \alpha$ almost preserves $\proj \circ \alpha$, and $[\alpha^{-1} \circ \varphi \circ \alpha] = h$
\end{proof}

\begin{cor}\label{cor:individual-stab-v}
Let $N \leq 8$ and let $w \in H_2(M_N)$ be any primitive, isotropic class. For any $h \in \Stab(w)$, there exists $\varphi \in \Diff^+(M_N)$ almost preserving some holomorphic genus-$0$ Lefschetz fibration $p: M_N \to \CP^1$ whose generic fiber represents the homology class $w$ such that $[\varphi] = h \in \Mod(M_N)$. 
\end{cor}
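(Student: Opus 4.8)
The plan is to read the corollary off Theorem \ref{thm:individual-stab-v} after checking that its minimal-genus hypothesis is automatic in the stated range. Theorem \ref{thm:individual-stab-v} already produces, for any $h$ in the stabilizer of a \emph{primitive, isotropic class of minimal genus $0$}, a diffeomorphism $\varphi \in \Diff^+(M_N)$ with $[\varphi] = h$ that almost preserves a holomorphic genus-$0$ Lefschetz fibration $p\colon M_N \to \CP^1$ whose generic fiber represents that class. Hence the only gap between the theorem and the corollary is the qualifier ``of minimal genus $0$'': the entire task reduces to showing that when $N \leq 8$, \emph{every} primitive isotropic class in $H_2(M_N;\Z)$ automatically has minimal genus $0$, and then invoking the theorem verbatim. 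This parallels the proof of Corollary \ref{cor:parabolics}, which deduced the subgroup version from Theorem \ref{thm:parabolics} in the same way.

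For $2 \leq N \leq 8$ the needed fact is exactly Lemma \ref{lem:isotropic}\ref{lem:minimal-genus}: any isotropic class in $H_2(M_N;\Z)$ has minimal genus $0$ in this range, the mechanism being that the negative-definite orthogonal complement has rank $N-1 < 8$, so by the diagonalizability of such lattices one transports $w$ to $v = H - E_1$ by a diffeomorphism (realized via Wall's theorem), and minimal genus is a diffeomorphism invariant. Feeding this conclusion into Theorem \ref{thm:individual-stab-v} settles the corollary for all $h \in \Stab(w)$ whenever $2 \leq N \leq 8$. It remains only to treat $N = 1$, which Lemma \ref{lem:isotropic} does not cover. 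There $H_2(M_1;\Z) = \Z\{H, E_1\}$ carries the form $\diag(1,-1)$, so the primitive isotropic classes are precisely $\pm(H \pm E_1)$; each is represented by an embedded $2$-sphere (for instance a fiber of the $\CP^1$-bundle structure on $M_1$, or the strict transform of a line through the blown-up point), hence has minimal genus $0$. One can therefore again apply Theorem \ref{thm:individual-stab-v}; alternatively, as its proof records, $\Stab(w)$ is trivial for $N = 1$, so $\varphi = \Id$ already works against the $\CP^1$-bundle $p\colon M_1 \to \CP^1$ with fiber class $w$. Either route disposes of $N = 1$.

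I anticipate essentially no obstacle, since all the substantive content is carried by Theorem \ref{thm:individual-stab-v} and Lemma \ref{lem:isotropic}\ref{lem:minimal-genus}, both already established. The only points deserving a moment's care are the edge case $N = 1$, which is excluded from the cited lemma and so must be checked by hand as above, and the formal observation that minimal genus is invariant under the diffeomorphism identifying $w$ with $v$; both are routine. Combining the two ranges then yields the corollary for every $N \leq 8$.
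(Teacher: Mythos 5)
Your proposal is correct and takes exactly the paper's route: the paper's entire proof is to invoke Lemma \ref{lem:isotropic}\ref{lem:minimal-genus} to conclude that $w$ has minimal genus $0$ and then apply Theorem \ref{thm:individual-stab-v}, which is your first paragraph verbatim. Your separate treatment of $N=1$ is in fact more careful than the paper, which quotes that lemma even though it is stated only for $2 \leq N \leq 8$; the one slip in it is that the sphere representatives you name (a ruling fiber, the strict transform of a line through the blown-up point) represent only $\pm(H-E_1)$, so for $\pm(H+E_1)$ you need a different representative --- e.g.\ tube the exceptional sphere to the strict transform of a line disjoint from it --- and likewise the genus-$0$ fibration with fiber class $H+E_1$ is not the standard ruling but its transport under a diffeomorphism realizing $E_1 \mapsto -E_1$, which exists by Wall's theorem \cite{wall-diffeos}.
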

\begin{proof}
If $N \leq 8$ and $w \in H_2(M_N; \Z)$ is a primitive, isotropic class then Lemma \ref{lem:isotropic}\ref{lem:minimal-genus} says that the minimal genus of $w$ is $0$. Now apply Theorem \ref{thm:individual-stab-v}.
\end{proof}

\small
\bibliographystyle{alpha}
\bibliography{parabolics}

\begin{thebibliography}{{Mor}38}

\bibitem[BB00]{bayle--beauville}
Lionel Bayle and Arnaud Beauville.
\newblock Birational involutions of $\mathbb{P}^2$.
\newblock {\em Asian J. Math}, 4:11--18, 2000.

\bibitem[Ber77]{bertini}
E.~Bertini.
\newblock Ricerche sulle trasformazioni univoche involutorie nel piano.
\newblock {\em Ann. Mat. Pura Appl.}, 8:244--286, 1877.

\bibitem[Can01]{cantat}
Serge Cantat.
\newblock {Dynamique des automorphismes des surfaces K3}.
\newblock {\em Acta Math.}, 187:1--57, 2001.

\bibitem[CLW21]{chen--li--wu}
Weimin Chen, Tian-Jun Li, and Weiwei Wu.
\newblock Symplectic rational {$G$}-surfaces and equivariant symplectic cones.
\newblock {\em J. Differential Geom.}, 119(2):221, 260, 2021.

\bibitem[DF01]{diller--favre}
J.~Diller and C.~Favre.
\newblock Dynamics of bimeromorphic maps of surfaces.
\newblock {\em Am. J. Math.}, 123:1135--1169, 2001.

\bibitem[DI09]{dolgachev--iskovskikh}
Igor~V. Dolgachev and Vasily~A. Iskovskikh.
\newblock {\em {Finite Subgroups of the Plane Cremona Group}}, pages 443--548.
\newblock Birkh{\"a}user Boston, 2009.

\bibitem[FL]{farb--looijenga-elliptic}
Benson Farb and Eduard Looijenga.
\newblock {\em To appear}.

\bibitem[Fre82]{freedman}
Michael~Hartley Freedman.
\newblock {The topology of four-dimensional manifolds}.
\newblock {\em J. Differ. Geom.}, 17:357--453, 1982.

\bibitem[Giz81]{gizatullin}
M.~H. Gizatullin.
\newblock {Rational $G$-surfaces}.
\newblock {\em Math. USSR-Izv.}, 16:103--134, 1981.

\bibitem[Gom05]{gompf2}
Robert~E. Gompf.
\newblock Locally holomorphic maps yield symplectic structures.
\newblock {\em Communications in Analysis and Geometry}, 13:511--525, 2005.

\bibitem[GS99]{gompf--stipsicz}
Robert~E. Gompf and Andr\'as~I. Stipsicz.
\newblock {\em {$4$-Manifolds and Kirby Calculus}}.
\newblock {American Mathematical Society}, 1999.

\bibitem[LL02]{li--li}
{Bang-He} Li and {Tian-Jun} Li.
\newblock Symplectic genus, minimal genus and diffeomorphisms.
\newblock {\em Asian J. Math.}, 6:123--144, 2002.

\bibitem[{Mor}38]{mordell}
{Mordell, L. J.}
\newblock {The definite quadratic forms in eight variables with determinant
  unity}.
\newblock {\em {J. Math. Pures Appl.}}, 17:41–46, 1938.

\bibitem[OS00]{oszvath--szabo}
Peter Oszv\'ath and Zolt\'an Szab\'o.
\newblock The symplectic {Thom} conjecture.
\newblock {\em Ann. Math.}, 151:93--124, 2000.

\bibitem[Qui86]{quinn}
Frank Quinn.
\newblock Isotopy of $4$-manifolds.
\newblock {\em J. Differ. Geom.}, 24:343--372, 1986.

\bibitem[Thu97]{thurston}
William~P. Thurston.
\newblock {\em {Three-Dimensional Geometry and Topology}}, volume~1.
\newblock {Princeton University Press}, 1997.

\bibitem[Wal64]{wall-diffeos}
C.T.C. Wall.
\newblock Diffeomorphisms of $4$-manifolds.
\newblock {\em J. London Math. Soc.}, s1-39(1):131--140, 1964.

\end{thebibliography}

\bigskip
\noindent
Seraphina Eun Bi Lee \\
Department of Mathematics \\
University of Chicago \\
\href{mailto:seraphinalee@uchicago.edu}{seraphinalee@uchicago.edu}

\end{document}